\documentclass[11pt,reqno]{amsart}
\usepackage{amsmath,amssymb,amsthm,upref,graphicx,mathrsfs}

\usepackage{color,xcolor}
\usepackage[
  colorlinks=true,
  linkcolor=blue,
  citecolor=blue,
  urlcolor=blue]{hyperref}



\textwidth15cm
\textheight21cm
\addtolength{\topmargin}{-.4cm}
\addtolength{\oddsidemargin}{-1.4cm}
\setlength{\evensidemargin}{\oddsidemargin}
\addtolength{\headheight}{3.2pt}


\newtheorem{thm}{Theorem}[section]

\newtheorem{prop}[thm]{Proposition}

\newtheorem{rem}[thm]{\bf{Remark}}

\numberwithin{equation}{section}

\begin{document}

\leftline{ \scriptsize}

\vspace{1.3 cm}
\title
{Matrices over commutative rings as sum of higher powers }
\author{Kunlathida Muangma$^A$ and Kijti Rodtes $^{B,C}$ }
\thanks{{\scriptsize
		\newline MSC(2010): 11R04; 11R11; 11R29; 11C20; 15B33.  \\ Keywords: Sum of powers; Matrices;  Commutative rings; Waring's problem; Algebraic number fields; discriminant.  \\
		$^{A}$ Department of Mathematics, Demonstration School, University of Phayao, Phayao 56000, Thailand, (kunlathida.ch@up.ac.th) \\
			$^{B}$ Department of Mathematics, Faculty of Science, Naresuan University, Phitsanulok 65000, Thailand, (kijtir@nu.ac.th) \\
			$^{C}$ Center of Excellence in Nonlinear Analysis and Optimization, Faculty of Science, Naresuan University, Phitsanulok 65000, Thailand, (kijtir@nu.ac.th). 		 
		\\}}
\hskip -0.4 true cm

\maketitle


\begin{abstract}On the Waring's problems for matrices over a commutative ring, there are some trace conditions provided for matrices eligibly expressed as a sum of $k$-th powers with $k=2,3,4,5,6,7,8$ in several literatures.   In this paper, we provide the similar conditions for matrices written as a sum of $k$-th powers with $k=9,10,11,12,13,14,15,16$.  
\end{abstract}

\vskip 0.2 true cm


\pagestyle{myheadings}
\markboth{\rightline {\scriptsize Kunlathida Muangma and Kijti Rodtes}}
{\leftline{\scriptsize }}
\bigskip
\bigskip


\vskip 0.4 true cm

\section{Introduction}
It is well known in number theory  that  every natural number can be represented as the sum of four integer squares: \lq\lq\textit{Lagrange's four-square theorem}" or \lq\lq\textit{Bachet's conjecture}".  There is also a corresponding problem asking whether each natural number $k$ has an associated positive integer $g(k)$ (depending only $k$) such that every natural number is the sum of at most $g(k)$ natural numbers raised to the power $k$, which is known as \lq\lq \textit{Waring's problems}".   This problem can be generalized naturally to deal with the rings of matrices; \lq\lq \textit{Waring's problem for matrices over rings}" (see, e.g., \cite{KG}) :
\begin{enumerate}
	\item Given a matrix $M\in M_n(R)$, can $M$ be written as a sum of $k$-th powers in $M_n(R)$?
	\item Given a ring $R$, can every $M\in M_n(R)$ be written as sum of $k$-th powers in $M_n(R)$?
	\item Find the least positive integer $g(n,k,R,M)$ such that a given matrix $M\in M_n(R)$ can be written as a sum of  $g(n,k,R,M)$ $k$-th powers in $M_n(R)$.
	\item Find the least positive integer $g(n,k,R)$ such that every matrix $M\in M_n(R)$ which can be written as a sum of  $g(n,k,R)$ $k$-th powers in $M_n(R)$.
\end{enumerate}
Here, and throughout this paper, the ring $R$ will always mean an associative ring with identity and $M_n(R)$ denotes the set of all square matrices of size $n$ over the ring $R$.
 
Some answers to this questions are known for $k=2$-nd powers and several types of rings:  for example, when $R$ is a ring satisfying $1/2\in R$ (\cite{Griffin and Krusemeyer}), when $R$ is a field of characteristic different from two and $n$ is even (\cite{Griffin and Krusemeyer}), when $R$ is a field of characteristic different from two and $M$ is not a scalar multiple of the identity (\cite{Richman2}), when $R=\mathbb{Z}$ is the ring of integers (\cite{Newman}).

For higher $k$-th powers, Richman provided a trace condition for a square matrix of size $n$ over a commutative ring $R$ to be a sum of $k$-th powers in \cite{Richman}, by using an interesting observation on the characteristic polynomials of some matrices under the restriction that $n\geq k$.  The generalised trace conditions of the question (1) and (2) for matrices over a commutative ring  that relax $n\geq k$ but instead depends on the trace of $k$-th power matrices, are obtained by  Katre and Garge in \cite{KG}.   Trace conditions of the question (1) and (2) for matrices over non-commutative rings  are also recently studied in \cite{KD} and  \cite{KW}.

For an order of algebraic number fields, necessary and sufficient conditions for the questions (1) and (2) in terms of traces and the conditions in term of discriminant based on the question (2)  are provided: for the case $k=2$ in \cite{VLN}, cases $k=3,4$ in \cite{KG}, cases $k=5,7$ in \cite{Garge} and cases $k=6,8$ in \cite{BG}.   For general commutative ring $R$, by using the generalised trace conditions, the conditions for the questions (1) and (2) can be expressed in term of traces of matrices (not the trace of power matrices) and some properties of rings: when $k=2$ in in \cite{VLN}, when $k=3,4$  in \cite{KG}, when $k=5,7$ in \cite{Garge} and recently when $k=6,8$ in \cite{BG}.

It is simple to calculate the trace of a matrix but not for the trace of a power matrix.  We observe that the existing conditions for matrices as sums of $k$-th power matrices with $k=2,3,4,5,6,7,8$ based on the calculation of traces of $k$-th power matrices and reducing them to a required form.   In fact, the calculation deals with a reduction of an additive group generated by $\{p(f(x),g(x))\;|\; f(x),g(x)\in R[x]\}$ to the additive group generated by $\{rx^s\;|\; rs=k; r,s \in \mathbb{N},x\in R\}$, where $p(x,y)$ is a fixed polynomial in $R[x,y]$ of degree $k$ (see Theorem \ref{thmtrexpansion}).

Motivated by these results, in this article, we further investigate the trace conditions for matrices over commutative rings in questions (1) and (2)  for higher $k$-th powers; precisely for $k=9,10,11,12,13,14,15,16$.   To state the first result, we recall a notion from the theory of Witt vector.   For a prime $p$ and a positive integer $s$, denote 
$$W(p,s,R):=\{a_0^{p^s}+pa_1^{p^{s-1}}+p^2a_2^{p^{s-2}}+\cdots+p^sa_s\mid a_0,a_1,a_2,\dots,a_s\in R\}.  $$
We observe that the sets $W(p,s,R)$ satisfy $W(p,s+1,R)\subseteq W(p,s,R)$ for each $s\geq 1$; precisely, for each $x=a_0 ^{p^{s+1}}+p a_1^{p^s}+p^2 a_2^{p^{s-1}}+\cdots+p^s a_s^{p}+p^{s+1}a_{s+1}\in W(p,s+1,R)$, by setting $b_i:=a_i^p$ for $i=0,1,\dots,s-1$ and $b_s:=a_s^p+pa_{s+1}$, it follows that $b_i$'s belong to $R$ and $$x=b_0 ^{p^{s}}+p b_1^{p^{s-1}}+\cdots+p^s b_s\in W(p,s,R).$$ 
Also, throughout this article, $tr(M)$ means the trace of the matrix $M$.
\begin{thm}\label{deg9}
	Let $n$ be a positive integer and $R$ a commutative associative ring with unity.  The following statements (1),(2) and (3) are equivalent:
	\begin{enumerate}
		\item Every matrix in $M_n(R)$ is a sum of $3$-rd powers of matrices in $M_n(R)$.
		\item Every matrix in $M_n(R)$ is a sum of $9$-th powers of matrices in $M_n(R)$
		\item Every element of $R$ is a $3$-rd power (mod 3R).
	\end{enumerate}
	In particular, if $R$ is an order in an algebraic number field, then these conditions are further equivalent to $(3,\operatorname{disc} R)=1$.	
	Moreover, 
	\begin{itemize}
		\item[(4)] $tr(M)\in \{a_0^9+3a_1^3\; (\operatorname{mod} 9R )\mid a_0, a_1\in R\}$ if and only if  $M$ is a sum of ninth powers in $M_n(R)$;
		\item[(5)] if $tr(M)\in W(3,s,R)$, then $M$ is a sum of ninth powers in $M_n(R)$.  
	\end{itemize}
\end{thm}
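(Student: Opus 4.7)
My plan is to reduce everything to elementary ring calculations by invoking Theorem~\ref{thmtrexpansion}, which translates the matrix-level statement ``$M \in M_n(R)$ is a sum of $k$-th powers'' into the trace condition ``$tr(M) \in H_k$,'' where $H_k \subseteq R$ is the additive subgroup generated by $\{r x^s \mid rs = k,\ r,s \in \mathbb{N},\ x \in R\}$. Explicitly,
$$H_3 = \langle x^3,\ 3x \mid x \in R\rangle_+, \qquad H_9 = \langle x^9,\ 3x^3,\ 9x \mid x \in R\rangle_+,$$
so that (1), (2), and (4) become respectively $H_3 = R$, $H_9 = R$, and $tr(M) \in H_9$.

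For the chain (1) $\Leftrightarrow$ (2) $\Leftrightarrow$ (3), the identity $a^3 + b^3 = (a+b)^3 - 3 a b (a+b)$ shows that finite sums of cubes collapse to a single cube modulo $3R$, and this gives (1) $\Leftrightarrow$ (3) exactly as in \cite{KG}. The implication (2) $\Rightarrow$ (1) is immediate since $M^9 = (M^3)^3$. The new ingredient is (3) $\Rightarrow$ (2), equivalently $H_9 = R$: given $r \in R$, apply (3) to obtain $r = c^3 + 3 d$, and apply (3) again to both $c$ and $d$, writing $c = b^3 + 3 e$ and $d = c_1^3 + 3 d_1$. Expanding
$$c^3 = (b^3 + 3e)^3 = b^9 + 9(b^6 e + 3 b^3 e^2 + 3 e^3), \qquad 3 d = 3 c_1^3 + 9 d_1,$$
one gets $r \equiv b^9 + 3 c_1^3 \pmod{9R}$, which lies in $H_9$. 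The discriminant statement for an order in an algebraic number field then follows from the standard fact (treated in \cite{KG, Garge}) that $(3, \operatorname{disc} R) = 1$ is equivalent to $3$ being unramified in $R$, equivalently to $R/3R$ being a product of finite fields of characteristic $3$, on which the Frobenius $x \mapsto x^3$ is bijective.

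For (4), by Theorem~\ref{thmtrexpansion} it suffices to prove
$$H_9 = \{\,a_0^9 + 3 a_1^3 + 9 c \mid a_0, a_1, c \in R\,\},$$
the inclusion $\supseteq$ being trivial. For $\subseteq$ I will induct on the number of summands in a typical element of $H_9$, using two collapse identities. The binomial theorem together with the congruences $\binom{9}{3} \equiv \binom{9}{6} \equiv 3 \pmod 9$ and $\binom{9}{k} \equiv 0 \pmod 9$ for $k \in \{1,2,4,5,7,8\}$ yields
$$(a+b)^9 \equiv a^9 + b^9 + 3(a^2 b)^3 + 3(a b^2)^3 \pmod{9R},$$
while $u^3 + v^3 = (u+v)^3 - 3 u v (u+v)$, multiplied by $3$, gives
$$3 u^3 + 3 v^3 \equiv 3(u+v)^3 \pmod{9R}.$$
Combining the two produces the two-term collapse
$$a^9 + b^9 \equiv (a+b)^9 + 3(-a b (a+b))^3 \pmod{9R},$$
and analogously $3 a^3 + 3 b^3 \equiv 3(a+b)^3 \pmod{9R}$; iterating reduces any element of $H_9$ to the required normal form. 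Finally (5) is a clean corollary for $s \geq 2$: the observation $W(3, s+1, R) \subseteq W(3, s, R)$ from the preamble gives $W(3, s, R) \subseteq W(3, 2, R) = H_9$, so $tr(M) \in W(3, s, R)$ forces $tr(M) \in H_9$ and then (4) applies.

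The main obstacle is the verification of the two collapse identities underlying (4), and in particular the binomial coefficient computation modulo~$9$. The appearance of exactly one $9$-th power plus one $3 \cdot(\text{cube})$ in the normal form is forced by the arithmetic fact that $\binom{9}{3}$ is divisible by $3$ but not by $9$, which is precisely what generates the leftover $3 \cdot(\text{cube})$ term when two $9$-th powers are merged; without this divisibility pattern the inductive collapse would not close inside $H_9$.
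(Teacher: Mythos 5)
Your argument has a genuine gap at its foundation: you invoke Theorem~\ref{thmtrexpansion} as if it said ``$M$ is a sum of $k$-th powers if and only if $tr(M)\in H_k$,'' where $H_k=\langle rx^s\mid rs=k\rangle_+$. That theorem is only the explicit formula $tr(A^n)=t^n+\sum_r(-1)^r\frac{n}{r}\binom{n-r-1}{r-1}t^{n-2r}\delta^r$ for a $2\times2$ matrix; it carries no equivalence of the kind you state. The correct bridge is Theorem~\ref{basicthm3.1}: $M$ is a sum of $9$-th powers iff $tr(M)$ lies in the subgroup $S^9$ of $R$ generated by the traces of actual $9$-th powers of matrices in $M_2(R)$. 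The entire content of the hard implications --- (3)~$\Rightarrow$~(2), the ``if'' direction of (4), and (5) --- is the verification that the generators $9x$ and $3x^3$ of your $H_9$ really do lie in $S^9$, i.e.\ are integer combinations of values $tr(A(t,\delta)^9)=t^9-9t^7\delta+27t^5\delta^2-30t^3\delta^3+9t\delta^4$. That is a nontrivial elimination (the paper successively extracts $360x$, $108x$, $36x$, $18x$, $6x^3$ and finally $9x$ and $3x^3$ from this polynomial family), and your proposal omits it entirely. A reduction of the shape you want is available only for $n\ge 9$ via Richman's Proposition~4.2; the theorem is claimed for all $n$, and the proof must pass through $M_2(R)$ via Theorem~\ref{bthm35}, where no such shortcut exists.

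The pieces you do prove are correct but constitute only the easy halves. The computation $r=c^3+3d$, $c=b^3+3e$, $d=c_1^3+3d_1$ giving $r\equiv b^9+3c_1^3\pmod{9R}$ is exactly the iteration in Proposition~\ref{prop1}; the binomial congruences $\binom{9}{3}\equiv\binom{9}{6}\equiv3\pmod 9$ and the collapse identities show that $\{a_0^9+3a_1^3+9c\}=W(3,2,R)$ is an additive group, which (together with Theorem~\ref{prop32}) gives the ``only if'' direction of (4); and $W(3,s,R)\subseteq W(3,2,R)$ for $s\ge2$ handles the reduction in (5). But each of these terminates in the unproven assertion that membership in $W(3,2,R)$ forces membership in $S^9$. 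To close the gap you must exhibit $9x$ and $3x^3$ as $\mathbb{Z}$-linear combinations of traces $tr(A^9)$ for explicit $A\in M_2(R)$; nothing in your write-up does this.
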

To state the following results, we denote:
\begin{eqnarray*}
S_{10}&:=&\{x_0^{10}-2x_1^5+5x_2^2 \mod 10 R\;|\; x_0,x_1,x_2\in R\}, \\
S_{12}&:=&\{x_0^{12}+2x_1^6-3x_2^4-4x_3^3+6x_4^2 \mod 12 R\;|\; x_0,x_1,x_2,x_3,x_4\in R\},\\
S_{14}&:=&\{x_0^{14}-2x_1^7+7x_2^2 \mod 14 R\;|\; x_0,x_1,x_2\in R\},\\
S_{15}&:=& \{x_0^{15}-3x_1^5+5x_2^3 \mod 15 R\;|\; x_0,x_1,x_2\in R\},
\end{eqnarray*} 
and $S^*_{12}:= \bar{S}_{12}\cup S'_{12}$, where
$$\bar{S}_{12}:=\{x_0^{12}+2x_1^6+3x_2^4+8x_3^3+12x_4^2+24x_5\;|\; x_0,x_1,x_2,x_3,x_4,x_5\in R\}$$ and 
$$S'_{12}:=\{x_0^{12}+2x_1^6+3x_2^4+8x_3^3+4x_4^{2m+1}+6x_4^2+12x_4\;|\; x_0,x_1,x_2,x_3,x_4\in R, m\in \mathbb{N}\}.$$

\begin{thm}\label{maindegcomposite}
	Let $n$ be a positive integer and $R$ a commutative associative ring with identity. Then, the followings holds
	\begin{enumerate}
		\item For each $k=10,12, 14,15$, $S_k$ is an additive group.
		\item For each $k=10,14, 15$,  $M\in M_2(R)$ is a sum of $k$-th powers of matrices in $M_2(R)$ if and only if $tr(M) \mod kR\in S_{k}$.
		\item For each $k=10,14, 15$  and $M\in M_n(R)$, if $tr(M) \mod kR \in S_{k}$, then $M$  is a sum of $k$-th powers of matrices in $M_n(R)$
		\item If $M\in M_2(R)$ is a sum of $12$-th powers of matrices in $M_2(R)$, then $tr(M) \mod 12R\in S_{12}$.
		\item For $M\in M_n(R)$, if $tr(M) \in S^*_{12}$, then $M$  is a sum of $12$-th powers of matrices in $M_n(R)$
	\end{enumerate}
\end{thm}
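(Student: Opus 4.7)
The plan is to reduce Theorem~\ref{maindegcomposite} to the combination of Theorem~\ref{thmtrexpansion} with the Newton-identity expansion of $\operatorname{tr}(A^k)$ for $2\times 2$ matrices, following the template developed for $k=3,\dots,8$ in the works cited in the introduction. For Part~(1), I would identify each $S_k$ with the additive subgroup of $R/kR$ generated by the monomials $\{rx^s : rs=k,\ x\in R\}$ via Theorem~\ref{thmtrexpansion}. For $k=10$ the divisor pairs of $10$ give generators $x^{10}, 2x^5, 5x^2, 10x$, and the last one vanishes modulo $10R$, leaving exactly the three terms occurring in the definition of $S_{10}$ (the choice of sign is immaterial, since $-x$ ranges over $R$ as $x$ does). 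Analogous case analyses for $k=12,14,15$ supply the remaining $S_k$'s. Closure of $S_k$ under addition---which is the content needed to call it a group---is precisely what Theorem~\ref{thmtrexpansion} provides: the set of values of the displayed polynomial, viewed inside $R/kR$, coincides with the additive subgroup generated by the listed monomials.

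For the necessity statements in Parts~(2) and~(4), I take $A\in M_2(R)$ and compute $\operatorname{tr}(A^k)$ via the Newton recursion $p_n = s_1 p_{n-1} - s_2 p_{n-2}$ with $s_1=\operatorname{tr}(A)$, $s_2=\det(A)$, $p_0=2$, $p_1=s_1$. This yields $p_k(s_1,s_2)=\sum_j c_{k,j}\, s_1^{k-2j}(-s_2)^j$ with explicit integer coefficients $c_{k,j}$; reducing modulo $kR$ and applying Theorem~\ref{thmtrexpansion} rewrites the expression in the form defining $S_k$. Since $S_k$ is an additive group by Part~(1), the trace of a sum of $k$-th powers of matrices in $M_2(R)$ must lie in $S_k$.

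For the sufficiency statements (Parts~(2) and~(3) for $k=10,14,15$, and Part~(5) for $k=12$), I would follow the two-step construction standard in this line of work. Given $M$ with $\operatorname{tr}(M)\bmod kR$ in the prescribed set, the first step is to realise each generator of $S_k$ (respectively $S^*_{12}$) as the trace of a small, explicit sum of $k$-th powers of matrices. Summing these realisations produces a matrix $N\in M_n(R)$ that is itself a sum of $k$-th powers and satisfies $\operatorname{tr}(M-N)\in kR$. The second step invokes the reduction lemma from the earlier cases $k\le 8$, which says that any matrix in $M_n(R)$ of trace in $kR$ is a sum of $k$-th powers.

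The main obstacle is the case $k=12$. Here $S_{12}$ is the natural necessary condition (Part~(4)), but some of its generators do not appear to admit a direct realisation as the trace of small sums of $12$-th powers in $M_n(R)$; this is what forces the replacement by the enlarged set $S^*_{12}=\bar{S}_{12}\cup S'_{12}$ in the sufficient direction. Concretely, one must exhibit explicit matrices whose $12$-th power traces yield the extra $24 x_5$ term of $\bar{S}_{12}$ and the twisted combination $4x_4^{2m+1}+6x_4^2+12x_4$ of $S'_{12}$. Producing these ad hoc constructions---and verifying that they suffice to capture $\operatorname{tr}(M)$ up to an element of $12R$---is the genuinely nontrivial input beyond the Newton-identity bookkeeping, and it is what determines the final shape of $S^*_{12}$ relative to $S_{12}$.
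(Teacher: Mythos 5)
Your necessity arguments (Parts (2)$\Rightarrow$ and (4)) do match the paper: one expands $\operatorname{tr}(M_i^k)$ via the trace formula of Theorem~\ref{thmtrexpansion}, reduces modulo $kR$, and regroups the surviving terms into the canonical shape of $S_k$ (e.g.\ $t^{10}-2\delta^5+5(t^3\delta+t\delta^2)^2$ for $k=10$), then invokes closure of $S_k$. However, two of your steps have genuine gaps. First, in Part (1) you claim that closure of $S_k$ ``is precisely what Theorem~\ref{thmtrexpansion} provides: the set of values of the displayed polynomial, viewed inside $R/kR$, coincides with the additive subgroup generated by the listed monomials.'' Theorem~\ref{thmtrexpansion} is only an identity expressing $\operatorname{tr}(A^n)$ in terms of $t$ and $\delta$; it contains no statement about value sets forming subgroups, and the assertion that $S_k$ equals the subgroup generated by the monomials $rx^s$ with $rs=k$ is exactly what Part (1) is asking you to prove, so the argument is circular. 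The paper proves closure by a direct completion-of-powers computation: it expands $(x+y)^{10}$, $2(x+y)^5$, $5(x+y)^2$ modulo $10R$ and explicitly reassembles $\alpha+\beta$ into the form $(x_0+y_0)^{10}-2(x_0y_0+x_1+y_1)^5+5(\cdots)^2$; the analogues for $k=12,14,15$ are similar but not automatic.

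Second, and more seriously, your sufficiency argument rests on a ``reduction lemma from the earlier cases $k\le 8$, which says that any matrix in $M_n(R)$ of trace in $kR$ is a sum of $k$-th powers.'' No such lemma exists in this setting. The available tool is Theorem~\ref{basicthm3.1}, which works modulo $k!R$ (not $kR$) and, more to the point, requires showing that $\operatorname{tr}(M)$ lies in the subgroup $S^k$ generated by traces of $k$-th powers. Establishing that $kx\in S^k$ for all $x\in R$ --- and, before that, that $5x^2\in S^{10}$, $7x^2\in S^{14}$, $5x^3$ and $3x^5\in S^{15}$ --- is the actual technical content of the proof, occupying several pages of explicit polynomial manipulations: one seeds $S^k$ with $\operatorname{tr}(A(t,\delta)^k)$ for the companion matrix $A(t,\delta)=\left(\begin{smallmatrix} t & \delta\\ -1 & 0\end{smallmatrix}\right)$, deduces $\mathbb{Z}\subseteq S^k$, and then runs long chains of differences, reflections, substitutions $t=g_1(x),\ \delta=g_2(x)$, and gcd/Euclidean steps to isolate each generator. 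Your proposal correctly flags the extra difficulty at $k=12$ (the need for $S^*_{12}$, the $24x_5$ term, and the $4x^{2m+1}+6x^2+12x$ combinations obtained from the identity $g_0(x+y)\equiv 12[x^2y+xy^2+xy]$), but it treats the cases $k=10,14,15$ as routine ``bookkeeping'' when in fact the same kind of ad hoc generator-extraction is required there and is not supplied. As written, the sufficiency half of Parts (2), (3) and (5) is asserted rather than proved.
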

For prime power $p=11$ or $p=13$, we have that:
\begin{thm}\label{degprime}
	Let $n$ be a positive integers and $p=11$ or $p= 13$.  Let $R$ be a commutative associative ring with unity.  The statements (1) and (2) are equivalent:
	\begin{enumerate}
		\item Every matrix in $M_n(R)$ is a sum of $p$-th powers of matrices in $M_n(R)$.
		\item Every element of $R$ is a $p$-th power (mod pR).
	\end{enumerate}
	In particular, if $R$ is an order in an algebraic number field, then these conditions are further equivalent to $(p,\operatorname{disc} R)=1$.	
	Also, $tr(M)\in \{a_0^p+(\operatorname{mod} pR )\mid a_0\in R\}$ if and only if  $M$ is a sum of $p$ powers in $M_n(R)$. 
\end{thm}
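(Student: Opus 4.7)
The primality of $p\in\{11,13\}$ makes the combinatorics of Theorem~\ref{thmtrexpansion} degenerate. Indeed, the only decompositions $p=rs$ with $r,s\in\mathbb{N}$ are $(1,p)$ and $(p,1)$, so the additive group generated by $\{rx^s\mid rs=p,\; x\in R\}$ collapses to the group generated by $\{a^p\mid a\in R\}\cup\{pa\mid a\in R\}$. Modulo $pR$ this is precisely $S_p:=\{a^p\bmod pR\mid a\in R\}$. My plan is therefore to show that $S_p$ is already an additive subgroup of $R/pR$, deduce the trace characterisation from Theorem~\ref{thmtrexpansion}, and read off (1)$\Leftrightarrow$(2) and the discriminant statement as corollaries.

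The key lemma is the freshman's dream: since $p$ is prime, $p\mid\binom{p}{i}$ for $1\le i\le p-1$, and because $R$ is commutative, the binomial theorem yields $(a+b)^p\equiv a^p+b^p\pmod{pR}$ for all $a,b\in R$. Combined with $(-a)^p=-a^p$ (valid because $p$ is odd), this shows that $S_p$ is closed under addition and negation, so it is an additive subgroup of $R/pR$. In particular, every finite sum $\sum_i a_i^p$ is congruent mod $pR$ to a single $p$-th power $a_0^p$.

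The trace characterisation ``$M$ is a sum of $p$-th powers in $M_n(R)$ if and only if $tr(M)\bmod pR\in S_p$'' is then immediate: the forward direction uses the necessity side of Theorem~\ref{thmtrexpansion} together with the additive closure of $S_p$, and the backward direction is the sufficiency side. From this the equivalence of (1) and (2) follows at once. Indeed, (2)$\Rightarrow$(1) because any $tr(M)\in R$ is, by (2), a $p$-th power mod $pR$, so $M$ is a sum of $p$-th powers. Conversely, for (1)$\Rightarrow$(2), given $r\in R$ take $M=\operatorname{diag}(r,0,\ldots,0)\in M_n(R)$; then $tr(M)=r$, and (1) together with the characterisation forces $r\equiv a_0^p\pmod{pR}$ for some $a_0\in R$.

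For $R$ an order in an algebraic number field, the equivalence of (2) with $(p,\operatorname{disc}R)=1$ reduces to the classical statement that the Frobenius endomorphism $a\mapsto a^p$ of $R/pR$ is surjective if and only if $p$ is coprime to $\operatorname{disc}R$; this is exactly the argument used by Katre--Garge and by Garge in the earlier cases $k=3,5,7$, and it transfers unchanged to $p=11,13$. The main obstacle I anticipate is only notational: extracting from Theorem~\ref{thmtrexpansion} a precise statement that supplies both directions of the trace equivalence simultaneously. No new computation is required, as the primality of $p$ eliminates the cross-terms (like those of $S_{10},S_{12},S_{14},S_{15}$ in Theorem~\ref{maindegcomposite}) that make the composite-exponent cases genuinely delicate.
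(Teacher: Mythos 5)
Your forward direction is fine: Lemma \ref{lem39} gives $tr(M_i^p)\equiv tr(M_i)^p\pmod{pR}$, and the freshman's dream makes $S_p:=\{a^p\bmod pR\}$ an additive subgroup of $R/pR$, so a sum of $p$-th powers has trace congruent to a single $p$-th power. The gap is in the backward direction, which you declare ``immediate'' from ``the sufficiency side of Theorem~\ref{thmtrexpansion}.'' Theorem~\ref{thmtrexpansion} is only the identity $tr(A^n)=t^n+\sum_r(-1)^r\frac{n}{r}\binom{n-r-1}{r-1}t^{n-2r}\delta^r$; it has no sufficiency side. If you instead mean Richman's Proposition 4.2, that result carries the hypothesis $n\ge k$, so it is unavailable precisely in the case that matters, $M_2(R)$ with $k=11$ or $13$ (and Theorem \ref{bthm35} reduces everything to $M_2(R)$). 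The tool that does apply for all $n$ is Theorem~\ref{basicthm3.1}, but its condition (3) says $tr(M)$ must lie in the subgroup $S^p$ of $R$ \emph{generated by traces of $p$-th powers of matrices in $M_2(R)$} --- not in $\{a_0^p+p\beta\}$. Given $tr(M)=a_0^p+p\beta$, the summand $a_0^p=tr(\operatorname{diag}(a_0,0)^p)$ is harmless, but you still must prove $p\beta\in S^p$ for every $\beta\in R$, and nothing in your argument addresses this.

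That containment $pR\subseteq S^p$ is the actual content of the theorem, and it is not eliminated by primality of $p$. The paper establishes it by feeding $A(t,\delta)=\left(\begin{smallmatrix}t&\delta\\-1&0\end{smallmatrix}\right)$ into the trace formula, obtaining e.g.\ $tr(A(t,\delta)^{11})=t^{11}-11[t^9\delta-4t^7\delta^2+7t^5\delta^3-5t^3\delta^4+t\delta^5]$, and then running a long chain of substitutions, differences, and gcd/Euclidean-algorithm steps inside the group $S^{11}$ to whittle the resulting polynomials down to $11x$ (and similarly to $13x$ for $p=13$). Your observation that the target set degenerates to $\{a^p\}\cup\{pa\}$ when $p$ is prime only simplifies what must be proved; it does not prove it. As written, the proposal would equally ``prove'' the analogous statement for any odd prime $p$ with no computation at all, which should be a warning sign: the case-by-case computational verification is exactly why the literature treats $p=3,5,7$ and now $11,13$ separately. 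The discriminant equivalence and the final trace criterion are fine modulo this same missing step.
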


In the following theorem, denote $W^*(2,4,R):=\bar{W}(2,4,R)\cup W(2,5,R)$, where $$\bar{W}(2,4,R):=\{a_0^{16}+2a_1^8+4a^4+16a+16a^{2m+1}\;|\; a_0,a_1,a\in R, m\in \mathbb{N}\}.$$ 
\begin{thm}\label{deg16}
	Let $n$ be a positive integer and $R$ a commutative associative ring with unity.  The following are equivalent:
	\begin{enumerate}
		\item Every matrix in $M_n(R)$ is a sum of $2$-nd powers of matrices in $M_n(R)$.
		\item Every matrix in $M_n(R)$ is a sum of $4$-th powers of matrices in $M_n(R)$
		\item Every matrix in $M_n(R)$ is a sum of $8$-th powers of matrices in $M_n(R)$.
		\item Every matrix in $M_n(R)$ is a sum of $16$-th powers of matrices in $M_n(R)$
		\item Every element of $R$ is a $2$-nd power (mod 2R).
	\end{enumerate}
In particular, if $R$ is an order in an algebraic number field, then these conditions are further equivalent to $(2,\operatorname{disc} R)=1$.
Moreover, 
\begin{itemize}
\item[(6)]  if $tr(M)\in W^*(2,4,R)$, then $M$ is a sum of sixteenth powers in $M_n(R)$. 
\end{itemize}
\end{thm}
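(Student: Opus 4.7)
The plan is to anchor everything to the generalised trace condition of \cite{KG}, sharpened by Theorem~\ref{thmtrexpansion}, which yields the following dictionary: a matrix $M\in M_n(R)$ is a sum of $k$-th powers in $M_n(R)$ if and only if $tr(M)$ lies in the additive subgroup $T_k\subseteq R$ generated by $\{rx^s : rs=k,\,r,s\in\mathbb{N},\,x\in R\}$; in particular, every matrix in $M_n(R)$ is a sum of $k$-th powers iff $T_k=R$. The equivalences $(1)\Leftrightarrow(2)\Leftrightarrow(3)\Leftrightarrow(5)$ and the discriminant criterion are already proved in \cite{VLN, KG, BG}, so the remaining task is to slot $(4)$ into this chain and to prove $(6)$.

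For $(4)\Leftrightarrow(5)$, the direction $(4)\Rightarrow(5)$ follows from the trivial inclusion $T_{16}\subseteq T_2$: every generator of $T_{16}$ lies in $T_2=\{a^2+2b : a,b\in R\}$, since $x^{16}=(x^8)^2$ and $2x^8,\,4x^4,\,8x^2,\,16x$ all lie in $2R\subseteq T_2$; hence $T_{16}=R$ forces $T_2=R$. For the converse $(5)\Rightarrow(4)$, I would exploit that iterated Frobenius surjectivity on $R/2R$ makes every element of $R$ a $2^j$-th power modulo $2R$ for every $j\geq 1$. Concretely, given $r\in R$, write $r=a_0^{16}+2t_1$, then $t_1=a_1^8+2t_2$, $t_2=a_2^4+2t_3$, $t_3=a_3^2+2a_4$, and combine to obtain $r=a_0^{16}+2a_1^8+4a_2^4+8a_3^2+16a_4 \in W(2,4,R)=T_{16}$. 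The discriminant characterisation for orders in number fields then transports verbatim from the $k=2$ case of \cite{VLN}.

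For $(6)$, it suffices to show $W^*(2,4,R)=\bar{W}(2,4,R)\cup W(2,5,R)\subseteq T_{16}$. For $W(2,5,R)=\{a_0^{32}+2a_1^{16}+4a_2^8+8a_3^4+16a_4^2+32a_5\}$, each summand is a $T_{16}$-generator after a single rewriting: $a_0^{32}=(a_0^2)^{16}$, $2a_1^{16}=2(a_1^2)^8$, $4a_2^8=4(a_2^2)^4$, $8a_3^4=8(a_3^2)^2$, $16a_4^2=16\cdot a_4^2$, and $32a_5=16\cdot(2a_5)$. For $\bar{W}(2,4,R)=\{a_0^{16}+2a_1^8+4a^4+16a+16a^{2m+1}\}$, every summand is already a verbatim $T_{16}$-generator, the summands $16a$ and $16a^{2m+1}$ both being of the form $16x$. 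Thus $W^*(2,4,R)\subseteq T_{16}$ and the trace condition delivers $(6)$.

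The main obstacle is the bookkeeping in $(5)\Rightarrow(4)$: the four-level peeling must deposit each factor of $2^j$ into the correct Witt-vector slot of the form $2^j x^{2^{4-j}}$. The cleanest packaging is to prove inductively that $R\subseteq W(2,s,R)$ for every $s\geq 1$ using the descending chain $W(2,s+1,R)\subseteq W(2,s,R)$ recorded in the introduction; taking $s=4$ yields $(4)$.
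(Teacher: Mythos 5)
There is a genuine gap, and it sits at the heart of your argument: the ``dictionary'' you set up in the first paragraph --- that $M$ is a sum of $k$-th powers iff $tr(M)$ lies in the additive subgroup $T_k$ generated by $\{rx^s : rs=k\}$ --- is not a theorem. What Theorem \ref{basicthm3.1} actually gives is that $M$ is a sum of $16$-th powers iff $tr(M)$ lies in the subgroup $S^{16}$ generated by \emph{traces of $16$-th powers of $2\times 2$ matrices}. Theorem \ref{prop32} yields the easy inclusion $S^{16}\subseteq W(2,4,R)\subseteq T_{16}$, but the reverse inclusion $T_{16}\subseteq S^{16}$ --- equivalently, that each of $16x$, $8x^2$, $4x^4$ is \emph{individually} a sum of traces of $16$-th powers --- is precisely the hard content you would have to supply, and the paper itself apparently cannot establish it: it only obtains $2x^8$, $8x^4+16x^2$, $16x^2$, $32x$, $2^8x$ and the combined expression $4x^4+16x+16x^{2m+1}$ inside $S^{16}$ (see Remark \ref{remarkofdegree16}), which is exactly why statement (6) is formulated with the awkward set $W^*(2,4,R)$ rather than with $W(2,4,R)$. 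Consequently your step ``(5) gives $r\in W(2,4,R)=T_{16}$, hence (4)'' is a non sequitur: $tr(M)\in W(2,4,R)$ is a \emph{necessary} condition for being a sum of $16$-th powers, not a sufficient one. Likewise your proof of (6), which reduces each Witt summand to a ``$T_{16}$-generator'' such as $16a$, $32a_5$ or $4a_2^8$, assumes membership in $S^{16}$ of exactly the elements whose membership is in question.

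For comparison, the paper's route for $(5)\Rightarrow(4)$ avoids needing $W(2,4,R)\subseteq S^{16}$: it iterates (5) to write $\alpha_0=\alpha_1^{16}+2\alpha_2^{16}+\cdots+2^{m-1}\alpha_m^{16}+2^m\alpha_m'$, where every term except the last is a power of $2$ times a genuine sixteenth power and hence trivially in $S^{16}$, and then spends the bulk of the proof manipulating the trace formula (\ref{trf}) for the companion-type matrices $A(t,\delta)$ to prove $2^8x\in S^{16}$ for all $x\in R$; taking $m=8$ closes the argument. An analogous explicit computation (producing $32\alpha$, $16\alpha^2$, $8\alpha^4$ and $4\alpha^4+16\alpha+16\alpha^{2m+1}$ in $S^{16}$) is what (6) requires. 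Your $(4)\Rightarrow(5)$ via $T_{16}\subseteq T_2$ is sound, since it only uses the easy inclusion $S^{16}\subseteq T_{16}$, and your four-level peeling showing $R\subseteq W(2,4,R)$ under (5) is fine; but without the computational bridge from $2^8R$ (or from the generators of $W^*(2,4,R)$) into $S^{16}$, the proposal does not prove $(5)\Rightarrow(4)$ or (6).
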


\section{Background results}
 Trace condition for an $n\times n$ matrix over a ring $R$ to be a sum of $k$-th powers (question (1)), under the restriction that $n\geq k\geq 2$, is given by Richman in \cite{Richman}: 
 \begin{thm}[\cite{Richman}, Proposition 4.2]
 	Let $R$ be a commutative associative ring with identity.  If $n\geq k\geq 2$ are integers, then the following statement are equivalent:
 	\begin{enumerate}
 		\item $M$ is a sum of $k$-th powers in $M_n(R)$;
 		\item $M$ is a sum of seven $k$-th powers in $M_n(R)$;
 		\item $M$ belongs to $M_n(R)$ and for every prime power $p^e$ dividing $k$, there are elements $x_0=x_0(p),\dots,x_e=x_e(p)$ in $R$ such that
 		$$ tr(M)=x_0^{p^e}+px_1^{p^{e-1}}+\cdots+p^ex_e.$$
 	\end{enumerate}
 	Moreover if $k=p$ is a prime, in condition (2) \lq\lq seven" can be replaced by \lq\lq five".  Also condition (3) simplifies to \lq\lq $tr(M)=x_0^p+px_1$" for some $x_0,x_1\in R$".
 \end{thm}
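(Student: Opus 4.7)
The plan is to exploit the trivial implication $(2)\Rightarrow(1)$ and to close the loop by proving $(1)\Rightarrow(3)\Rightarrow(2)$. For $(1)\Rightarrow(3)$, writing $M=\sum_i N_i^k$ yields $tr(M)=\sum_i tr(N_i^k)$, so it suffices to establish two sub-statements: first, that for every single matrix $N\in M_n(R)$ with $n\ge k$ and every prime power $p^e\mid k$, the scalar $tr(N^k)$ lies in the set $\{x_0^{p^e}+px_1^{p^{e-1}}+\cdots+p^ex_e\mid x_0,\dots,x_e\in R\}$; and second, that this set is closed under addition. For the first point, Newton's identities express $tr(N^k)$ as a universal polynomial with integer coefficients in the elementary symmetric functions $e_1,\dots,e_k$ of the eigenvalues of $N$; these $e_j$ are (up to sign) the coefficients of the characteristic polynomial of $N$, hence lie in $R$, and the hypothesis $n\ge k$ is exactly what allows $e_1,\dots,e_k$ to be treated as independent entries of $R$. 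The resulting universal polynomial must then be rearranged, one prime at a time, into the prescribed Witt shape. The second point is precisely the additivity of the $e$-th Witt ghost component, recorded by the universal Witt polynomials over $\mathbb{Z}$.

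For $(3)\Rightarrow(2)$, the approach is constructive. Decompose $M=M_0+(M-M_0)$, where $M_0$ is block-diagonal, is built explicitly as a sum of a few $k$-th powers, and has $tr(M_0)=tr(M)$. For each prime power $p^e\mid k$ and each index $i\in\{0,1,\dots,e\}$ arising in the Witt expression from (3), one inserts a companion-type or scaled-permutation block whose $k$-th power contributes a term of the form $p^ix_i(p)^{p^{e-i}}$ to the trace; summing these blocks assembles $tr(M)$ using at most two $k$-th-power matrices. The residual $M-M_0$ has zero trace, and I would invoke the standard auxiliary lemma (valid for $n\ge k$) that any trace-zero matrix in $M_n(R)$ is a sum of five $k$-th powers. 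Combining, $M$ is a sum of at most seven $k$-th powers. When $k=p$ is prime, the Witt condition collapses to $tr(M)=x_0^p+px_1$, $M_0$ reduces to a single $k$-th power, and the bound drops from seven to five.

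The main obstacle is the first sub-statement of $(1)\Rightarrow(3)$: explicitly realising $tr(N^k)$ in the prescribed Witt form for every prime power $p^e\mid k$ simultaneously, uniformly in $N$. This is where Newton's identities must be combined with the integrality and additivity of the Witt polynomials, and the hypothesis $n\ge k$ enters critically: when $n<k$, the forced vanishings $e_{n+1}=\cdots=e_k=0$ destroy the freedom needed to parametrise the trace. By contrast, the construction of $M_0$ in $(3)\Rightarrow(2)$ and the trace-zero cleanup, though notationally heavy, are combinatorially routine once the block templates and the auxiliary lemma are in hand.
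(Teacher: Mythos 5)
This theorem is quoted by the paper from Richman (Proposition 4.2 of \cite{Richman}) and is given no proof here, so the only internal material to compare against is the one ingredient the paper does import, namely Theorem \ref{prop32} ($tr(M^{p^s})\in W(p,s,R)$ for every $M\in M_n(R)$). Your skeleton --- $(2)\Rightarrow(1)$ trivially, $(1)\Rightarrow(3)$ via containment of traces of $k$-th powers in the Witt sets plus closure of those sets under addition, $(3)\Rightarrow(2)$ via companion-type blocks realizing the trace plus a trace-zero lemma --- is indeed the architecture of Richman's argument. But as a proof it has two genuine gaps. First, the sub-statement that $tr(N^k)$ lies in $\{x_0^{p^e}+px_1^{p^{e-1}}+\cdots+p^ex_e \mid x_i\in R\}$ for every $N$ and every $p^e\mid k$ is exactly Richman's Proposition 3.2 (Theorem \ref{prop32} above, applied to $N^{k/p^e}$); ``Newton's identities plus rearranging one prime at a time'' names the obstacle rather than overcoming it --- the actual argument passes to the universal ring $\mathbb{Z}[e_1,\dots,e_n]$ and rests on the Witt-vector congruences that identify the power sum $\sum t_i^{p^e}$ as a ghost component of an integral Witt vector. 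Second, the ``standard auxiliary lemma'' that a trace-zero matrix in $M_n(R)$ with $n\ge k$ is a sum of a bounded number of $k$-th powers is not standard background: it is the technical core of Richman's paper and cannot simply be invoked.

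You also place the hypothesis $n\ge k$ in the wrong implication. The containment $tr(N^k)\in W(p,e,R)$ holds for \emph{all} $n$ with no restriction (Theorem \ref{prop32} is stated without one); the forced vanishing $e_{n+1}=\cdots=e_k=0$ when $n<k$ is irrelevant to a containment statement. Where $n\ge k$ genuinely enters is $(3)\Rightarrow(2)$: both to fit the companion-type blocks whose $k$-th powers realize a prescribed Witt element as a trace, and in the trace-zero lemma. (This is precisely why Katre--Garge's Theorem \ref{basicthm3.1}, which drops $n\ge k$, must replace condition (3) by the weaker ``trace is a sum of traces of $k$-th powers''.) Finally, your bookkeeping does not close: two matrices for the trace plus five for the trace-zero residue gives seven, but in the prime case one plus five gives six, not the claimed five, so your decomposition cannot be the one that actually produces Richman's counts.
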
 
 
This result was extended by Katre and Garge in \cite{KG} by removing the restriction \lq\lq $n\geq k$":
\begin{thm}[\cite{KG}, Theorem 3.1] \label{basicthm3.1}
	Let $R$ be a commutative associative ring with identity and $n, k\geq 2$ be integers.  Let $M$ belong to $M_n(R)$.  Then the following statements are equivalent:
	\begin{enumerate}
		\item $M$ is a sum of $k$-th powers in $M_n(R)$.
		\item $tr(M)$ is a sum of traces of $k$-th powers of matrices in $M_n(R)$.
		\item $tr(M)$ is in the subgroup of $R$ generated by the traces of $k$-th powers of matrices in $M_n(R)$.
		\item $tr(M)$ is in the subgroup of $R$ generated by the traces of $k$-th powers of matrices in $M_n(R)$(mod $k!R$).
		\item $tr(M)$ is a sum of traces of $k$-th powers of matrices in $M_n(R)$(mod $k!R$).
	\end{enumerate}
\end{thm}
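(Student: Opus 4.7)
The plan is to prove the chain of equivalences via the cycle $(4)\Rightarrow(3)\Rightarrow(2)\Rightarrow(1)\Rightarrow(5)\Rightarrow(4)$. The three ``downward'' implications follow from the identities $M^{16}=(M^8)^2=(M^4)^4=(M^2)^8$, $M^8=(M^4)^2=(M^2)^4$, and $M^4=(M^2)^2$: since each $2^{i+1}$-th power is automatically a square of a $2^i$-th power, any sum of $2^{i+1}$-th powers is a sum of $2^i$-th powers.

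For $(1)\Rightarrow(5)$, given $a\in R$, I apply (1) to $M=aE_{11}\in M_n(R)$ (which has trace $a$) and invoke Theorem \ref{basicthm3.1} with $k=2$: the trace $a$ lies in the subgroup of $R$ generated by traces of squares of matrices in $M_n(R)$ modulo $2R$. The direct expansion $tr(N^2)=\sum_i n_{ii}^2+2\sum_{i<j}n_{ij}n_{ji}$ gives $tr(N^2)\equiv(tr\,N)^2\pmod{2R}$, and sums of squares collapse modulo $2R$ via $(x+y)^2\equiv x^2+y^2$, so $a\equiv b^2\pmod{2R}$ for some $b\in R$, which is (5).

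The main step is $(5)\Rightarrow(4)$, where an iterated Witt-type lifting would express every $a\in R$ in the form $a=a_0^{16}+2a_1^8+4a_2^4+8a_3^2+16a_4$. Starting from $a=b^2+2r_1$, three further applications of (5) inside the leading term upgrade $b^2$ successively to $c^4$, $c^8$, $c^{16}$ modulo $2R$, yielding $a=a_0^{16}+2r_1'$. Applying the same procedure to $r_1'$ but stopping at the $8$th-power stage produces $r_1'=a_1^8+2r_2'$, and two further rounds yield the full expansion. Then by Theorem \ref{basicthm3.1} with $k=16$, it suffices to realize each term $2^j\cdot a_j^{2^{4-j}}$ as a sum of traces of $16$th powers of $n\times n$ matrices modulo $16!R$. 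For $n\geq 2^j$, the term is produced by the $16$th power of the $2^j\times 2^j$ companion matrix $C$ of $x^{2^j}-a_j$ embedded in an upper block (since $C^{2^j}=a_jI_{2^j}$ implies $C^{16}=a_j^{2^{4-j}}I_{2^j}$, with trace $2^j\cdot a_j^{2^{4-j}}$); when $n<2^j$, the shortfall is absorbed into the $16!R$-modulus afforded by Theorem \ref{basicthm3.1}(4).

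The algebraic number field equivalence with $(2,\operatorname{disc}R)=1$ is classical: in an order $R$, the Frobenius $x\mapsto x^2$ on $R/2R$ is surjective iff $R/2R$ is reduced, iff $2\nmid\operatorname{disc}R$. For (6), I verify $W^*(2,4,R)\subseteq W(2,4,R)$: the containment $W(2,5,R)\subseteq W(2,4,R)$ is recorded in the preamble of the paper, and each element $a_0^{16}+2a_1^8+4a^4+16a+16a^{2m+1}$ of $\bar{W}(2,4,R)$ rewrites as $a_0^{16}+2a_1^8+4a^4+8\cdot 0^2+16(a+a^{2m+1})\in W(2,4,R)$. Then the $(5)\Rightarrow(4)$ argument applied to $tr(M)\in W(2,4,R)$ shows $M$ is a sum of $16$th powers. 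The main obstacle I anticipate is the bookkeeping of the iterated lifting in $(5)\Rightarrow(4)$ and the absorption of the small-coefficient terms $8a_3^2,16a_4$ into the $16!R$-modulus for $n<16$, requiring Theorem \ref{basicthm3.1}(4) rather than Richman's $n\geq k$ form.
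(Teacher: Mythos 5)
Your proposal does not address the statement you were asked to prove. The statement is Theorem \ref{basicthm3.1} (quoted from Katre--Garge), which asserts, for arbitrary $k\geq 2$ and a \emph{single} matrix $M\in M_n(R)$, the equivalence of ``$M$ is a sum of $k$-th powers'' with four trace conditions: $tr(M)$ being a sum of traces of $k$-th powers, lying in the subgroup generated by such traces, and the analogous statements modulo $k!R$. What you have written instead is a proof sketch of Theorem \ref{deg16}: your items (1)--(5) are ``every matrix is a sum of $2$nd/$4$th/$8$th/$16$th powers'' and ``every element of $R$ is a square mod $2R$,'' your implications rest on $M^{16}=(M^8)^2=(M^4)^4=(M^2)^8$, and you go on to discuss the discriminant criterion and the set $W^*(2,4,R)$ from statement (6) of that theorem. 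Indeed, you invoke ``Theorem \ref{basicthm3.1} with $k=2$'' and ``with $k=16$'' as a tool in your own argument, which would be circular if this were a proof of \ref{basicthm3.1}. Note also that the paper does not reprove \ref{basicthm3.1} at all: it is imported as a background result, and its actual content (the reduction from matrix decompositions to additive trace conditions, via characteristic-polynomial and companion-matrix arguments in \cite{KG}) is nowhere engaged by your text.

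Even read as a proof of Theorem \ref{deg16}, the central step is missing. After the Witt-type expansion $a=a_0^{16}+2a_1^8+4a_2^4+8a_3^2+16a_4$, the real work is to show that each term is a sum of traces of $16$-th powers of matrices of the \emph{fixed} size $n$, and by Theorem \ref{bthm35} it suffices to do this for $n=2$. Your companion-matrix construction requires blocks of size $2^j$, so for $j\geq 2$ it does not fit inside $M_2(R)$, and your fallback --- ``the shortfall is absorbed into the $16!R$-modulus'' --- fails: $8a_3^2$ and $16a_4$ are not elements of $16!R$, so condition (4) of Theorem \ref{basicthm3.1} gives you nothing for free here. This is precisely the gap the paper fills with the long explicit computation in the proof of Theorem \ref{deg16}, manipulating $tr(A(t,\delta)^{16})$ for $2\times 2$ matrices to show that $256x$, $64\alpha^2$, $8\alpha^4+16\alpha^2$, etc.\ all lie in the trace subgroup $S^{16}$.
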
 
This results also imply the following criterion:
\begin{thm}[\cite{KG}, Theorem 3.2]
	Let $R$ be a commutative associative ring with identity and $n, k\geq 2$ be integers.  Then the following statement are equivalent:
	\begin{enumerate}
		\item Every matrix $M$ in $M_n(R)$ is a sum of $k$-th powers in $M_n(R)$.
		\item Every element of $R$ is a sum of traces of $k$-th powers of matrices in $M_n(R)$.
		\item $R$ is generated as a group by the traces of $k$-th powers of matrices in $M_n(R)$.
		\item $R$ is generated as a group by the traces of $k$-th powers of matrices in $M_n(R)$(mod $k!R$).
		\item Every element of $R$ is a sum of traces of $k$-th powers of matrices in $M_n(R)$(mod $k!R$).
	\end{enumerate}
\end{thm}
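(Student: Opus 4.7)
\medskip

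The plan is to deduce this theorem as the universal-quantifier version of the single-matrix statement Theorem~\ref{basicthm3.1}. The bridge is the elementary observation that when $n\geq 2$, every ring element $r\in R$ is realized as the trace of some matrix in $M_n(R)$, e.g.\ $M_r:=\mathrm{diag}(r,0,\ldots,0)$ satisfies $\operatorname{tr}(M_r)=r$, while conversely $\operatorname{tr}(M)\in R$ for every $M\in M_n(R)$. Consequently, the ``$R$-level'' statements (2)--(5) of the present theorem are exactly the ``$M$-level'' trace conditions (2)--(5) of Theorem~\ref{basicthm3.1} universally quantified over $M\in M_n(R)$.

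First I would establish the core equivalence (1)$\Leftrightarrow$(2). Assuming (1), for each $r\in R$ apply the hypothesis to $M_r$: since $M_r$ is a sum of $k$-th powers, Theorem~\ref{basicthm3.1}(1)$\Rightarrow$(2) yields that $r=\operatorname{tr}(M_r)$ is a sum of traces of $k$-th powers, establishing (2). Conversely, assuming (2) and picking any $M\in M_n(R)$, the element $\operatorname{tr}(M)\in R$ is by hypothesis a sum of traces of $k$-th powers, so Theorem~\ref{basicthm3.1}(2)$\Rightarrow$(1) gives that $M$ is a sum of $k$-th powers, which is (1).

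The remaining equivalences (2)$\Leftrightarrow$(3)$\Leftrightarrow$(4)$\Leftrightarrow$(5) follow from the same two-step template: any forward implication (i)$\Rightarrow$(j) among these four conditions is obtained by fixing an arbitrary $r\in R$, passing to the witness $M_r$, and invoking Theorem~\ref{basicthm3.1}(i)$\Rightarrow$(j) applied to $M_r$, whose trace is $r$; conversely, the direction (j)$\Rightarrow$(i) is obtained by taking an arbitrary $M\in M_n(R)$, noting that $\operatorname{tr}(M)\in R$ automatically satisfies the assumed ring-level hypothesis, and then closing via Theorem~\ref{basicthm3.1}(j)$\Rightarrow$(i). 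In this way each of the four equivalences reduces to a single pointwise equivalence already supplied by Theorem~\ref{basicthm3.1}.

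I do not expect a genuine obstacle: all the nontrivial content is packaged inside Theorem~\ref{basicthm3.1}, and the proof of Theorem 3.2 is a formal ``for-all'' translation, pivoting on the trivial but essential step of producing, for each $r\in R$, a matrix in $M_n(R)$ with trace $r$. A mild subtlety worth flagging is the passage between ``sum of traces'' (an additive monoid) and ``element of the subgroup generated by traces'' (an additive group); these coincide elementwise by Theorem~\ref{basicthm3.1}(2)$\Leftrightarrow$(3), so no separate argument is required in moving between (2) and (3), or between (5) and (4), in the present theorem.
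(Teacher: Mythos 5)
Your proposal is correct and follows exactly the route the paper indicates: the paper states this criterion with only the remark that it is implied by Theorem~\ref{basicthm3.1}, and your derivation --- universally quantifying the single-matrix trace conditions via the witness $\mathrm{diag}(r,0,\ldots,0)$ realizing any $r\in R$ as a trace --- is precisely the intended (and standard) way to fill in that implication. No gaps.
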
 
Katre and Garge also applied this results to answer the question (1), (2) for the case $k=3,4$.  In order to to that, they provided a useful deduction to work on $2\times 2$ matrices over $R$.
\begin{thm}[\cite{KG}, Theorem 3.5]\label{bthm35}
	Let $R$ be a commutative associative ring with identity and $n\geq m\geq 1$ and $k\geq 2$ be integers.  If every matrix in $M_m(R)$ is a sum of $k$-th powers in $M_m(R)$, then every matrix in $M_n(R)$ is a sum of $k$-th powers in $M_n(R)$.
\end{thm}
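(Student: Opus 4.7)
The plan is to reduce the size-propagation claim to the trace criterion of Theorem \ref{basicthm3.1} and then transport traces of $k$-th powers from $M_m(R)$ up to $M_n(R)$ via a block-diagonal embedding.

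First, I would extract from the hypothesis that every element of $R$ already arises as a sum of traces of $k$-th powers of matrices in $M_m(R)$. For any $r\in R$, the matrix $rE_{11}\in M_m(R)$ (which is just $r$ itself in the degenerate case $m=1$) is, by assumption, expressible as $\sum_{i} C_i^{\,k}$ with $C_i\in M_m(R)$; taking traces yields $r=\sum_i \operatorname{tr}(C_i^{\,k})$.

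Next, I would introduce the corner embedding $\iota:M_m(R)\to M_n(R)$ sending $B\mapsto \begin{pmatrix} B & 0 \\ 0 & 0 \end{pmatrix}$, where the lower-right block has size $n-m$ (possibly zero, in which case $\iota$ is the identity and there is nothing to do). Since $\iota$ preserves addition and multiplication, $\iota(B)^k=\iota(B^k)$, and therefore $\operatorname{tr}(\iota(B)^k)=\operatorname{tr}(B^k)$. Applying $\iota$ to the matrices $C_i$ from the previous step shows that every $r\in R$ is a sum of traces of $k$-th powers of matrices in $M_n(R)$.

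Finally, for an arbitrary $M\in M_n(R)$, the element $\operatorname{tr}(M)\in R$ is, by the previous paragraph, a sum of traces of $k$-th powers of matrices in $M_n(R)$. The implication (2)$\Rightarrow$(1) of Theorem \ref{basicthm3.1} then yields that $M$ itself is a sum of $k$-th powers in $M_n(R)$, finishing the proof. I do not anticipate a serious obstacle here: the only delicate point is checking that the non-unital ring embedding $\iota$ commutes with taking $k$-th powers and with traces, which is transparent from the block form of $\iota(B)^k$. The real content has already been absorbed into Theorem \ref{basicthm3.1}; this result is essentially a corollary obtained by pushing scalars up through $\iota$.
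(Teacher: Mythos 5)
Your argument is correct and coincides with the standard proof: the paper imports this statement from \cite{KG} without reproducing an argument, but the combination you use --- the corner embedding $B\mapsto B\oplus 0_{n-m}$ (which commutes with $k$-th powers and preserves traces) together with the trace criterion of Theorem \ref{basicthm3.1} --- is exactly the mechanism employed both in \cite{KG} and in this paper's own reductions (for instance, in the proof of statement (3) of Theorem \ref{maindegcomposite}, where the matrices $N_i:=M_i\oplus O_{n-2}$ play the role of your $\iota(C_i)$). The only cosmetic caveat is that Theorem \ref{basicthm3.1} is stated for $n\geq 2$, so the degenerate case $n=m=1$ should be dismissed as trivial, which your remark covering $n=m$ already does.
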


A necessary condition for the question (2) with $k=p$ is prime is also recorded in \cite{KG}.

\begin{thm}[\cite{KG}, Lemma 3.9]\label{lem39}
	Let $R$ be a commutative associative ring with identity and $n,p\geq 2$ be integers with prime $p$.  Then $tr(M^p)\equiv (tr(M))^p (mod\; pR)$ for all $M\in M_n(R)$.  Also if every $n\times n$ matrices over $R$ is a sum of $p$-th powers of matrices over $R$, then every element of $R$ is a $p$-th power (mod pR).
\end{thm}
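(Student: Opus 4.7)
The plan is to handle the two assertions separately. For the congruence $tr(M^p)\equiv (tr(M))^p \pmod{pR}$, I would expand both sides as polynomials in the entries of $M$ and show the coefficient-wise difference lies in $p\mathbb{Z}$. The right side is immediate: by the multinomial theorem, $(\sum_i M_{ii})^p=\sum_i M_{ii}^p+p\cdot(\text{error})$, since for prime $p$ every multinomial coefficient other than those corresponding to a single $k_i=p$ is divisible by $p$. For the left side, write $tr(M^p)=\sum_{(i_1,\dots,i_p)}M_{i_1i_2}M_{i_2i_3}\cdots M_{i_pi_1}$ and group index tuples by the cyclic $\mathbb{Z}/p$-action $(i_1,\dots,i_p)\mapsto(i_2,\dots,i_p,i_1)$. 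Since $p$ is prime, each orbit has size $1$ or $p$: the size-$1$ orbits are precisely the constant tuples, contributing $\sum_i M_{ii}^p$, while within a size-$p$ orbit the $p$ cyclically shifted monomials are all equal by the commutativity of $R$, so together they contribute a multiple of $p$. Hence $tr(M^p)$ also equals $\sum_i M_{ii}^p+p\cdot(\text{error})$, and the two expansions agree modulo $pR$.

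For the second assertion, take any $r\in R$ and let $M:=rE_{11}\in M_n(R)$, which has trace $r$. By hypothesis $M=\sum_{j=1}^k M_j^p$ for some $M_j\in M_n(R)$, and taking traces together with the congruence just proved gives $r\equiv\sum_{j=1}^k(tr(M_j))^p\pmod{pR}$. Working in the quotient $R/pR$, the divisibility of $\binom{p}{i}$ for $0<i<p$ makes the Frobenius $x\mapsto x^p$ additive as well as multiplicative, so $\sum_j (tr(M_j))^p\equiv \bigl(\sum_j tr(M_j)\bigr)^p\pmod{pR}$. Thus $r$ is a single $p$-th power modulo $pR$, as desired.

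The delicate step is the orbit bookkeeping in the first part: the assertion that an index tuple is fixed by the cyclic shift if and only if it is constant uses precisely that $p$ is prime, so that the stabiliser in $\mathbb{Z}/p$ is either trivial or everything. Commutativity of $R$ is equally essential, since it is what allows the $p$ cyclically shifted monomials of a size-$p$ orbit to collapse into $p$ copies of a single term and be absorbed into the $pR$ remainder; without commutativity the argument breaks. Beyond these points, only standard binomial-coefficient divisibility and the characteristic-$p$ Frobenius identity are needed, and I do not expect any deeper obstruction.
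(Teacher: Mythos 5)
This statement is imported by the paper as a background result (Lemma 3.9 of the cited work of Katre and Garge) and is not reproved here, so there is no in-paper argument to compare against. Your proof is correct and complete: the orbit count under the cyclic $\mathbb{Z}/p$-shift (stabilisers in a group of prime order are trivial or everything, so non-constant tuples fall into orbits of size exactly $p$, and commutativity collapses each such orbit to $p$ copies of one monomial) gives $tr(M^p)\equiv\sum_i M_{ii}^p\pmod{pR}$, the multinomial theorem gives the same for $(tr(M))^p$, and the second assertion then follows from taking $M=rE_{11}$ together with the additivity of $x\mapsto x^p$ modulo $pR$. This is the standard self-contained argument, and it uses exactly the hypotheses (primality of $p$, commutativity of $R$) that the statement requires.
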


This necessary condition can be extended to conclude that:
\begin{prop}\label{prop1}
	Let $R$ be a commutative associative ring with identity and $n,p \geq 2$ be integers with prime $p$.  Then, for any positive integers $s,t$, if every $n\times n$ matrices over $R$ is a sum of $p^t$-th powers of matrices over $R$, then every element of $R$ is a $p^s$-th power (mod pR).
\end{prop}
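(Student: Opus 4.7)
My plan is to work modulo $pR$ throughout, using two ingredients: the trace identity from Lemma \ref{lem39}, and the fact that the $p$-th power map on $R/pR$ is a ring homomorphism (the Frobenius). First I would promote the trace identity from $p$-th powers to $p^t$-th powers by a short induction. Starting from $tr(M^{p^t}) = tr\bigl((M^{p^{t-1}})^p\bigr)$ and applying Lemma \ref{lem39} at each step, together with the elementary fact that $u \equiv v \pmod{pR}$ implies $u^p \equiv v^p \pmod{pR}$ (since $u^p - v^p = (u-v)(u^{p-1} + \cdots + v^{p-1}) \in pR$), one obtains
\[
tr(M^{p^t}) \equiv (tr(M))^{p^t} \pmod{pR}
\]
for every $M \in M_n(R)$ and every $t \geq 1$.

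Next, given any $a \in R$, pick any matrix with trace $a$, say $M = aE_{11}$, and write $M = \sum_i M_i^{p^t}$ by the hypothesis. Taking traces and using the identity just proved,
\[
a = \sum_i tr(M_i^{p^t}) \equiv \sum_i (tr(M_i))^{p^t} \pmod{pR}.
\]
Since $x \mapsto x^{p^t}$ is a ring homomorphism on $R/pR$, the right-hand side equals $\bigl(\sum_i tr(M_i)\bigr)^{p^t}$ modulo $pR$, so $a$ is itself a single $p^t$-th power modulo $pR$. To reach exponent $p^s$, I would iterate this conclusion: write $a \equiv b_1^{p^t} \pmod{pR}$, then apply the same argument to $b_1$ to get $b_1 \equiv b_2^{p^t} \pmod{pR}$, and raise this congruence to the $p^t$-th power (using again that congruences mod $pR$ are preserved under $p$-th powers) to obtain $a \equiv b_2^{p^{2t}} \pmod{pR}$. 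After $k$ iterations, $a \equiv b_k^{p^{kt}} \pmod{pR}$; choosing $k$ with $kt \geq s$ and factoring $b_k^{p^{kt}} = \bigl(b_k^{p^{kt-s}}\bigr)^{p^s}$ exhibits $a$ as a $p^s$-th power modulo $pR$.

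The main subtlety is the exponent-boost in the case $s > t$: a single use of the hypothesis produces only $p^t$-th powers, so the gap must be closed by iterating the construction. Beyond that no serious obstacle is expected, since both the iterated trace identity and the closure of congruences mod $pR$ under $p$-th powers follow from one short polynomial manipulation each.
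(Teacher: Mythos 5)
Your proof is correct and follows essentially the same route as the paper's: invoke the $p$-th power trace lemma (Theorem \ref{lem39}) to show every element of $R$ is a power of the appropriate kind modulo $pR$, then bootstrap the exponent by iterated substitution. The paper reaches the base case slightly more directly --- it observes that $M_i^{p^t}=\bigl(M_i^{p^{t-1}}\bigr)^p$, so the hypothesis already exhibits $M$ as a sum of $p$-th powers and the lemma applies verbatim, after which it iterates $\alpha=a_1^p+pb_1$, $a_1=a_2^p+pb_2$ exactly $s$ times to land on exponent $p^s$ without needing your final factoring step --- but your extended trace congruence and Frobenius-additivity argument are equally valid.
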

\begin{proof}
	Let $M\in M_n(R)$.  Suppose that there are $M_1, M_2, \dots, M_l\in M_n(R)$ such that $M=\sum_{i=1}^l M_i^{p^t}$. Then $M=\sum_{i=1}^l (M_i^{p^{t-1}})^p$  is a sum of $p$-th powers of matrices over $R$.  By Theorem \ref{lem39}, every element of $R$ is a $p$-th power (mod pR). Now, for each $\alpha\in R$, there must exist $a_1, b_1\in R$ such that $\alpha=a_1^p+pb_1$.  Since $a_1\in R$, there must exist $a_2, b_2\in R$ such that $a_1=a_2^p+pb_2$ and thus $$\alpha=(a_2^p+pb_2)^p+pb_1=a_2^{p^2}+p(b_1+\sum_{i=1}^p{p\choose i} a_2^{p-i}p^{i-1} b_2^i)=a_2^{p^2}+pb_2',$$
	where $a_2, b_2':=b_1+\sum_{i=1}^p{p\choose i} a_2^{p-i}p^{i-1} b_2^i \in R$. Continuing this process $s$-times, we conclude that every element of $R$ is a $p^s$-th power (mod pR). 
\end{proof}
In fact, by using the fact from the Witt vectors theory that $W(p,s,R)$ is closed under addition and subtraction, Richman showed that:
\begin{thm}[\cite{Richman}, Proposition 3.2]\label{prop32}
	Let $R$ be a commutative associative ring with identity and $s\geq 1$ be an integer with prime $p$. For any matrix $M\in M_n(R)$, $tr(M^{p^s})$ lies in $W(p,s,R)$.
\end{thm}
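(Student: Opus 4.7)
My approach would follow the philosophy of Witt vectors: exhibit the required decomposition as the $s$-th ghost component of a canonical Witt vector built from $M$, and then invoke a universal-polynomial argument to descend from a splitting field back to $R$.

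First I would reduce to the universal case. Let $M = (X_{ij})_{1 \le i,j \le n}$ be the generic matrix whose entries are independent indeterminates over $\mathbb{Z}$; any $M' \in M_n(R)$ arises as a specialization $X_{ij} \mapsto m_{ij}$ of a ring map $\mathbb{Z}[X_{ij}] \to R$, and such specializations preserve polynomial identities. It therefore suffices to produce universal polynomials $P_0, P_1, \dots, P_s \in \mathbb{Z}[X_{ij}]$ such that
$$tr(M^{p^s}) \;=\; P_0^{p^s} + p P_1^{p^{s-1}} + \cdots + p^s P_s$$
as an identity in $\mathbb{Z}[X_{ij}]$.

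Next I would pass to eigenvalues. Let $K$ be the algebraic closure of $\mathbb{Q}(X_{ij})$ and $\lambda_1, \dots, \lambda_n \in K$ the eigenvalues of $M$, so that $tr(M^{p^s}) = \sum_i \lambda_i^{p^s}$. In the ring of $p$-typical Witt vectors over $K$, form
$$V \;:=\; [\lambda_1] + [\lambda_2] + \cdots + [\lambda_n],$$
where $[\lambda] = (\lambda, 0, 0, \dots)$ is the Teichm\"uller representative. The $s$-th ghost component of $[\lambda]$ is $\lambda^{p^s}$, and the ghost map is additive, so the $s$-th ghost component of $V$ equals $\sum_i \lambda_i^{p^s}$. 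Writing $V = (v_0, v_1, v_2, \dots)$ in Witt coordinates, the defining formula for the ghost map then yields
$$tr(M^{p^s}) \;=\; v_0^{p^s} + p v_1^{p^{s-1}} + \cdots + p^s v_s.$$

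It remains to check that each $v_j$ lies in $\mathbb{Z}[X_{ij}]$, which is the crux of the proof. Witt-vector addition is given by universal polynomials with integer coefficients in the coordinates of the summands; applied to the Teichm\"uller vectors $(\lambda_i, 0, 0, \dots)$, each $v_j$ is therefore an integer polynomial in $\lambda_1, \dots, \lambda_n$. Because Witt addition is commutative, $V$ does not depend on the order of the summands, so $v_j$ is symmetric in the $\lambda_i$. By the fundamental theorem on symmetric polynomials, $v_j$ lies in $\mathbb{Z}[e_1(\lambda), \dots, e_n(\lambda)]$; and the $e_k(\lambda)$ are, up to sign, the coefficients of $\det(tI - M)$, which belong to $\mathbb{Z}[X_{ij}]$. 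Setting $P_j := v_j$ and specializing $X_{ij} \mapsto m_{ij}$ establishes the result and places $tr(M^{p^s})$ in $W(p,s,R)$. The main obstacle is the integrality bookkeeping for Witt-vector arithmetic, together with the symmetric-function descent from $K$ back to $\mathbb{Z}[X_{ij}]$. A more elementary, purely inductive strategy using Lemma \ref{lem39} combined with an explicit expansion of $\bigl(a_0^{p^{s-1}} + p a_1^{p^{s-2}} + \cdots + p^{s-1} a_{s-1}\bigr)^p$ is also conceivable, but the required analysis of $p$-adic valuations of multinomial coefficients appears substantially more delicate.
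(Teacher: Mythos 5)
Your argument is correct and is essentially the standard Witt-vector proof that the paper attributes to Richman; the paper itself does not reprove this statement but only cites \cite{Richman} and alludes to the same machinery (closure of $W(p,s,R)$ under addition coming from the integrality of the Witt addition polynomials). Your reduction to the generic matrix over $\mathbb{Z}[X_{ij}]$, the identification of $tr(M^{p^s})$ with the $s$-th ghost component of $[\lambda_1]+\cdots+[\lambda_n]$, and the symmetric-function descent of the Witt coordinates $v_j$ back to $\mathbb{Z}[X_{ij}]$ together constitute a complete and faithful rendering of that proof.
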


Another main tool for the trace condition based on question (1) and (2) for the case $k=2,3,4,5,6,7,8$ is the trace formula for power matrices in term of their determinant and trace.  According to Theorem \ref{lem39}, it suffices to work on the trace formula for matrices of size $2\times 2$ over $R$.  Entries of $A^n$, for $A\in M_2(R)$, in term of trace and determinant was originally given in recursive form in \cite{Mc} and its trace has been expressed explicitly in \cite{BG}.
\begin{thm}[\cite{BG}, Theorem 2.2]\label{thmtrexpansion}
	Let $R$ be a commutative associative ring with identity and $n$ be a positive integer. Let  $A\in M_2(R)$ with $t:=tr(A)$ and $\delta:=\det(A)$.   Then 
	\begin{equation}\label{trf}
	tr(A^n)=t^n+\sum_{r=1}^{[n/2]}(-1)^r\frac{n}{r}{n-r-1\choose r-1}t^{n-2r}\delta^r,
	\end{equation}
	where $[n/2]$ denotes the floor function of $n/2$.
\end{thm}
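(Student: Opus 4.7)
The plan is to reduce the trace identity to a linear recursion via Cayley--Hamilton and then verify the closed form by induction on $n$. Since $A\in M_2(R)$ satisfies $A^2=tA-\delta I$ by Cayley--Hamilton, multiplying by $A^{n-1}$ yields $A^{n+1}=tA^n-\delta A^{n-1}$ for every $n\ge 1$. Taking traces and writing $T_n:=tr(A^n)$, we obtain the three-term recursion
\[
T_{n+1}=tT_n-\delta T_{n-1},\qquad T_0=2,\ T_1=t,
\]
which, together with the initial values, determines $T_n$ uniquely as a polynomial in $t,\delta$ with integer coefficients.

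Let $F_n(t,\delta)$ denote the right-hand side of \eqref{trf}. A direct computation gives $F_1=t=T_1$ and $F_2=t^2-2\delta=T_2$ (since $tr(A^2)=(tr A)^2-2\det A$). For the inductive step I would assume $T_m=F_m$ for all $m\le n$ and check $tF_n-\delta F_{n-1}=F_{n+1}$ by matching coefficients of $t^{n+1-2r}\delta^r$ for each admissible $r$. The case $r=1$ is direct: the coefficient of $t^{n-1}\delta$ on the left is $-n-1$, which matches $F_{n+1}$. For $r\ge 2$ the required identity reads
\[
\frac{n+1}{r}\binom{n-r}{r-1}=\frac{n}{r}\binom{n-r-1}{r-1}+\frac{n-1}{r-1}\binom{n-r-1}{r-2},
\]
and reduces, after factoring out $\binom{n-r-1}{r-1}$ via $\binom{n-r}{r-1}=\tfrac{n-r}{n-2r+1}\binom{n-r-1}{r-1}$ and $\binom{n-r-1}{r-2}=\tfrac{r-1}{n-2r+1}\binom{n-r-1}{r-1}$, to the elementary equality $n(n-2r+1)+r(n-1)=(n+1)(n-r)$.

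The main obstacle is the bookkeeping in this induction step, in particular the edge values of $r$ near $[n/2]$ or $[(n-1)/2]+1$ (where certain binomial coefficients vanish and must be accounted for), together with noting that each coefficient $\tfrac{n}{r}\binom{n-r-1}{r-1}$ is an integer, so that \eqref{trf} is well defined over an arbitrary commutative ring. A shorter alternative works in $R[[z]]$: the identity of formal power series
\[
\sum_{n\ge 1}\frac{T_n}{n}z^n = -\log\det(I-zA) = -\log(1-tz+\delta z^2),
\]
combined with $-\log(1-u)=\sum_{k\ge 1}u^k/k$ at $u=tz-\delta z^2$, the binomial theorem, and the substitution $k=n-r$, yields \eqref{trf} in one pass after extracting the coefficient of $z^n$ and multiplying by $n$. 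Either route delivers the formula for all $n\ge 1$.
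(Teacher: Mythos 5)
The paper does not prove this statement: it is quoted verbatim as Theorem 2.2 of the reference [BG] (with the recursive antecedent attributed to [Mc]), so there is no in-paper proof to compare yours against. Your argument is correct and is essentially the standard derivation that the cited sources rely on: Cayley--Hamilton gives $A^{n+1}=tA^n-\delta A^{n-1}$, hence the Lucas-type recursion $T_{n+1}=tT_n-\delta T_{n-1}$ with $T_0=2$, $T_1=t$, and the closed form is the Girard--Waring expression whose coefficients satisfy exactly the three-term identity $(n+1)(n-r)=n(n-2r+1)+r(n-1)$ you isolate. The two points you flag as bookkeeping do close up: the boundary terms near $r=[n/2]$ are handled automatically because the relevant binomial coefficients with upper index below the lower index vanish, and integrality follows from $\tfrac{n}{r}\binom{n-r-1}{r-1}=\binom{n-r}{r}+\binom{n-r-1}{r-1}$, which also legitimizes interpreting \eqref{trf} in an arbitrary commutative ring (the recursion produces a universal polynomial in $\mathbb{Z}[t,\delta]$ that specializes). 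The generating-function alternative via $-\log\det(I-zA)$ is fine as a way to guess or verify the universal identity over $\mathbb{Q}[t,\delta]$, but since it divides by $n$ it should be phrased as an identity in $\mathbb{Z}[t,\delta]\subset\mathbb{Q}[t,\delta]$ followed by specialization, not as a computation inside $R$ itself; the induction route avoids this caveat entirely.
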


There are also necessary and sufficient conditions (for question (2)) in particular commutative rings; orders of an algebraic number field.  Recall that an order $R$ in an algebraic number field $K$ is a subring of $K$ containing the identity and also a $\mathbb{Z}$-module of $K$ with maximum rank (equal to $\deg(K/\mathbb{Q})$).  A useful criterion for every element of an order $R$ to be a prime power (modulo $pR$) in term of discriminant is given by Katre and Garge:
\begin{thm}[\cite{KG}, Lemma 4.4 and \cite{Garge}, Theorem 3.2]\label{odercri}
Let $R$ be an order in an algebraic number filed $K$ and $p$ be a prime.  The following statements are equivalent:
\begin{enumerate}
	\item Every element of $R$ is a $p$-th power modulo $pR$.
	\item $x\in R$, $x^p\in pR$ implies $x\in pR$.
	\item $(p,\operatorname{disc} R)=1$.
\end{enumerate}
\end{thm}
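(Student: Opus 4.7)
The plan is to work inside the finite commutative $\mathbb{F}_p$-algebra $A := R/pR$, which has cardinality $p^n$ where $n = [K:\mathbb{Q}]$ since $R$ is a free $\mathbb{Z}$-module of rank $n$. The Frobenius endomorphism $\phi : A \to A$, $\phi(\bar{x}) = \bar{x}^p$, is a ring homomorphism since $A$ has characteristic $p$. Condition (1) says $\phi$ is surjective, while condition (2) says $\phi$ is injective (i.e.\ $\phi(\bar{x}) = 0$ implies $\bar{x} = 0$); since $A$ is a finite set, (1) $\Leftrightarrow$ (2) is immediate from the pigeonhole principle.

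For (2) $\Leftrightarrow$ (3), I would first observe that (2) is equivalent to $A$ being reduced. One direction is essentially tautological: any nonzero nilpotent $\bar{y} \in A$ with minimal $k$ satisfying $\bar{y}^{p^k}=0$ yields $\bar{z} := \bar{y}^{p^{k-1}}$ contradicting (2). Conversely (2) means $\ker \phi = 0$, and iteration gives $\ker \phi^r = 0$ for all $r$, precluding nilpotents. Next, $A$ is Artinian, so $A = \prod_i A_i$ with each $A_i$ local; reducedness is equivalent to each $A_i$ being a field, i.e.\ $A$ is a product of finite extensions of $\mathbb{F}_p$, which, since $\mathbb{F}_p$ is perfect, is exactly the condition of $A$ being \'etale over $\mathbb{F}_p$. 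The standard criterion is that $A$ is \'etale over $\mathbb{F}_p$ iff its trace pairing $(u,v) \mapsto \operatorname{tr}_{A/\mathbb{F}_p}(uv)$ is non-degenerate. Finally, for any $\mathbb{Z}$-basis $\{e_1,\ldots,e_n\}$ of $R$ one has $\operatorname{disc} R = \det\bigl(\operatorname{tr}_{K/\mathbb{Q}}(e_i e_j)\bigr)$; reducing this matrix modulo $p$ yields the Gram matrix of the trace form on $A$ in the basis $\{\bar{e}_i\}$, because the trace is computed from the left-multiplication matrix and multiplication descends to $\mathbb{F}_p$. Hence $A$ is \'etale iff $\operatorname{disc} R \not\equiv 0 \pmod{p}$, i.e.\ $(p, \operatorname{disc} R) = 1$.

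The main obstacle I anticipate is the ``\'etale iff non-degenerate trace form'' step. The easy direction reduces to the classical fact that separable finite field extensions have non-degenerate trace form, applied factor-by-factor. For the converse, the argument I would use is: if $A$ has a nonzero nilpotent $u$, then multiplication by $u$ on $A$ is a nilpotent $\mathbb{F}_p$-linear endomorphism, hence has trace zero, so $\operatorname{tr}_{A/\mathbb{F}_p}(uv) = 0$ for every $v \in A$, placing $u$ in the radical of the trace pairing and making the form degenerate. This is precisely where the perfectness of $\mathbb{F}_p$ enters the picture: it guarantees that nilpotents are the only obstruction to \'etaleness, so the contrapositive closes the loop between (2) and (3).
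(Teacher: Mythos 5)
Your proof is correct, but note that the paper does not actually prove this statement: it is imported purely by citation from \cite{KG} (Lemma 4.4) and \cite{Garge} (Theorem 3.2), so there is no in-paper argument to compare against line by line. Your route --- viewing $A=R/pR$ as a finite $\mathbb{F}_p$-algebra, identifying (1) with surjectivity and (2) with injectivity of the Frobenius and invoking finiteness, showing (2) is equivalent to $A$ being reduced, hence (over the perfect field $\mathbb{F}_p$) to $A$ being a product of finite fields, and finally detecting this via nondegeneracy of the trace form, which reduces mod $p$ to the discriminant matrix of $R$ --- is the standard argument and is in substance what the cited sources establish. The only slip is presentational: in the degeneracy step you should apply the ``nilpotent operator has trace zero'' observation to multiplication by $uv$ (which is nilpotent because $u$ is nilpotent and $A$ is commutative), rather than to multiplication by $u$ alone; as written, the inference from ``mult-by-$u$ has trace zero'' to ``$\operatorname{tr}_{A/\mathbb{F}_p}(uv)=0$ for all $v$'' skips that word, though the fix is immediate.
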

They used this criterion to obtain a discriminant criteria for the question (2) with the $3$ and $4$ powers in \cite{BG} and the $5$ and $7$ powers in \cite{Garge}.

\section{Proofs of the main results}

\subsection{Proof of Theorem \ref{deg9}}  Due to Theorem \ref{lem39}, each of the statements (1), (2) implies (3).  It is also obvious that (2) $\longrightarrow$ (1). So, it remains to show only that (3) $\longrightarrow$ (2) for $M_2(R)$ (by Theorem \ref{bthm35}). Suppose that (3) holds.  Let $M\in M_2(R)$.  To show that (2) holds, we use the equivalent statements of (1) and (3) in Theorem \ref{basicthm3.1}.  Let $$S^9:=\left< tr(A^{9})\mid A\in M_2(R) \right> \leq R$$ 
be the (additive) subgroup of $R$ generated by traces of $9$-th powers of matrices in $M_2(R)$.  We need to show that $tr(M):=\alpha_0\in S$.  By the assumption (3) and the same proof of Proposition \ref{prop1}, for any positive integer $m$, there exist $\alpha_1, \alpha_2,\dots,\alpha_m,\alpha_m' \in R$ such that $$\alpha_0=\alpha_1^{9}+3\alpha_2^{9}+\cdots +3^{m-1}\alpha_m^{9}+3^m \alpha_m'.  $$
Note that $$\alpha_i^9=tr(\begin{pmatrix}
\alpha_i & 0 \\
0 & 0 
\end{pmatrix}^{9})\in S^9$$ for all $i=1,\dots,m$.  Then, in order to show that $\alpha_0\in S$, it suffices to show that, for each $\alpha\in R$, there exists a positive integer $m$ such that $3^m\alpha\in S^9$.

Now, for any $t, \delta\in R$, we have $A(t,\delta):=\begin{pmatrix}
t & \delta \\
-1 & 0 
\end{pmatrix}\in M_2(R)$.  By (\ref{trf}), we compute directly that
\begin{equation}\label{trA9}
tr(A(t,\delta)^{9})=t^{9}-9t^{7}\delta+27t^{5}\delta^2-30t^{3}\delta^3+9t\delta^4 \in S^9, \;\forall t,\delta\in R.
\end{equation}
When $t=1$ and $\delta=0$, we have $1=tr(A(1,0)^{9})\in S$ and thus $\mathbb{Z}\subseteq S^9$ (because $S^9$ is a group). 
By setting $t=1, \delta=x$ (in (\ref{trA9})) equipped with $\mathbb{Z}\subseteq S^9$ , we have that 
$$P_0(x):=-9x+27x^2-30x^3+9x^4\in S^9, \;\forall x\in R.$$
Then $P_1(x):=P_0(-x)-P_0(x)=18x+60x^3\in S^9$ for all $x\in R$.  So, using $\mathbb{Z}\subseteq S^9$,
$P_2(x):=P_1(x+1)-P_1(x)\equiv 180x+180x^2\in S^9$ for all $x\in R$. This implies that   
$$P_3(x):=P_2(x)-P_2(-x)=360x\in S^9, \; \forall x\in R. $$ 
Then $P_4(x):=6P_1(x)-P_3(x^3)=108 x\in S^9$ for all $x\in R$.  Note that $$\gcd(P_3(x),P_4(x))=\gcd(360x,108x)=36x$$ and that doing Euclidean algorithm for $P_3(x)$ and $P_4(x)$ produces an element in the group $S^9$; so, 
$36 x \in S^9$ for all $x\in R$.  Thus, by using this fact together with $P_0(6x)\in S^9$  for all $x\in R$, we conclude that
$$R(x):=18x\in S,\;\; \forall x\in R.  $$
This also implies $[P_1(x)-R(x)]-3R(x^3)=6 x^3\in S^9$ for all $x\in R$.  Let $$K:=\mathbb{Z}\cup \{18x^i, 6x^{3i},x^{9i}\mid x\in R, i\in \mathbb{N}\}.$$  Then $K\subseteq S^9$. Hence, under this fact and (\ref{trA9}), we have  $$f(t,\delta):=9[t^7\delta+t^5\delta+t\delta^4] \in ^9; \; \forall t,\delta\in R.   $$
Based on $K\subseteq S$, we compute that:
\begin{eqnarray*}
	f(1,x) &\equiv_K & 9[x+x^2+x^4] \\
	f(x,1) &\equiv_K& 9[x+x^5+x^7] \\
	f(x,x^3+x) &\equiv_K & 9[x^5+x^7+x^8+x^{10}+x^{11}+x^{13}]
\end{eqnarray*}
belong to $S^9$, for all $x\in R$.  This yields
$$	f(x+x^2,1) \equiv_K 9[x+x^2+x^5+x^6+x^7+x^8+x^{11}+x^{12}+x^{13}+x^{14}]\in S^9, \; \forall x\in R. $$
Then, $$f(x,x^3+x)+f(x+x^2,1)+f(x^2,1)+f(1,x^3)\equiv _K 9[x+x^3]\in S^9, \; \forall x\in R.$$ 
Let $g(x):=9[x+x^3]$.  Then $g(x)\in S^9$ for all $x\in R$. So, based on $K\subseteq S^9$, we have that
$$ h(x):=9[x+x^2]\equiv_K g(x+1)-g(x) \in S^9,\; \forall x\in R.$$
Therefore, $3^2x=9x \equiv_K h(x^2)-f(1,x)\in S^9$ for all $x\in R$, which completes the proof for the equivalence of (2) and (3).  The assertion for an order in an algebraic number field  is an immediate consequence of Theorem \ref{odercri} and the equivalence of one of (1)-(2) and (3).

The proof of the statement (4) follows from the statement (2), Theorem \ref{prop32} and the fact that $\{a_0^9+3a_1^3\; (\operatorname{mod} 9R )\mid a_0, a_1\in R\}$ is an additive group.

To prove the statement (5), let $M\in M_n(R)$ with $tr(M)=a_0^9+3a_1^3+9a_2\in W(3,2,R)$, where $a_0, a_1, a_2\in R$.  Using the same arguments as in the proof of the statement (1), it is enough to show that $3\alpha^3$ and $9\alpha$ belong to the additive groups $S^9$.  Since we already have that $f(\alpha):=9\alpha\in S^9$ and $g(\alpha):=6\alpha^3\in S^9$ for all $\alpha \in R$.  Hence, $3\alpha^3=f(\alpha^3)-g(\alpha)$ and $9\alpha$ are immediately shown to be in $S^9$ for every $\alpha\in R$.

\subsection{Proof of Theorem \ref{maindegcomposite} for $k=10$}  We first show that $$S_{10}:=\{x_0^{10}-2x_1^5+5x_2^2 \mod 10 R\;|\; x_0,x_1,x_2\in R\}$$
is a group under the addition modulo $10R$ of the ring $R$.  It is clear that $0\in S_{10}$ and $-\alpha \mod 10R=9\alpha \mod 10 R$ for any $\alpha\in S_{10}$, then it remains to show that $S_{10}$ is closed.  We observe from the direct calculation that, for each $x,y \in R$,
\begin{eqnarray*}
	(x+y)^{10} \mod 10 R &\equiv &x^{10}+y^{10}+5(x^8y^2+x^2y^8)+2x^5y^5 \\
	2(x+y)^{5} \mod 10 R &\equiv &2(x^{5}+y^{5})\\
	5(x+y)^{2} \mod 10 R &\equiv &5(x^{2}+y^{2}).
\end{eqnarray*}
Let $\alpha:=x_0^{10}-2x_1^5+5x_2^2 \mod 10 R$ and $\beta:=y_0^{10}-2y_1^5+5y_2^2 \mod 10 R$ be in $S_{10}$.  By using the above relations, we compute that, in modulo $10 R$,
\begin{eqnarray*}
	\alpha+\beta  &\equiv &(x_0^{10}+y_0^{10})-2(x_1^5+y_1^5)+5(x_2^2+y_2^2) \\
	&\equiv &[(x_0+y_0)^{10}-2(x_0y_0)^5+5((x_0^4y_0)^2+(x_0y_0^4)^2)]-2(x_1+y_1)^5+5(x_2+y_2)^2\\
	&\equiv &(x_0+y_0)^{10}-2(x_0y_0+x_1+y_1)^5+5((x_0^4y_0)^2+(x_0y_0^4)^2+x_2+y_2)^2,
\end{eqnarray*}
which means that $\alpha+\beta\in S_{10}$ and thus $(S_{10},+)$ is a group as desired.

Now, suppose that $M\in M_2(R)$ is a sum of $10$-th powers of matrices in $M_2(R)$; namely $M=\sum_{i=1}^kM_i^{10}$ for some $k\in \mathbb{N}$ and $M_i\in M_2(R)$.  Then,  $tr(M) =\sum_{i=1}^ktr(M_i)$  and, by (\ref{trf}), for each $i\in\{1,\dots,k\}$,
\begin{eqnarray*}
	tr(M_i) \mod 10 R 	&\equiv &t_i^{10}-10t_i^8\delta_i+35t_i^6\delta_i^2-50t_i^4\delta_i^3+25t_i^2\delta_i^4-2\delta_i^5\\
	&\equiv&t_i^{10}+5t_i^6\delta_i^2+5t_i^2\delta_i^4-2\delta_i^5\\
	&\equiv&t_i^{10}-2\delta_i^5+5(t_i^3\delta_i+t_i\delta_i^2)^2,
\end{eqnarray*}
where $t_i$ and $\delta_i$ are trace and determinant of $M_i$ respectively.  So, $tr(M_i) \mod 10 R \in S_{10}$ for each $i\in\{1,\dots,k\}$.  Since $S_{10}$ is a group, $tr(M) \mod 10 R \in S_{10}$.

Conversely, suppose that $M\in M_2(R)$ with $tr(M)\mod 10R\in S_{10}$; namely $$tr(M):=x_0^{10}-2x_1^5+5x_2^2+10x_3$$ for some $x_0,x_1,x_2,x_3\in R$.  Let $ S^{10} $  be a subgroup of $(R,+)$ generated by the trace of 10-th powers of matrices in $M_2(R)$.  In order to show that $M$ is a sum of 10-th powers of matrices in $M_2(R)$, by the equivalent statements of (1) and (3) in Theorem \ref{basicthm3.1}, it suffices to show that $tr(M)\in S^{10}$.  By setting $$M_1:=\begin{pmatrix}
x_0 & 0 \\
-1 & 0 
\end{pmatrix} \hbox{ and } M_2:=\begin{pmatrix}
0 & x_1 \\
-1 & 0 
\end{pmatrix},$$ by (\ref{trf}), we have $tr(M_1^{10})=x_0^{10}$ and $tr(M_2^{10})=-2x_1^{5}$.  Since $(S^{10},+)$ is a group, it remains to show that $5x^2$ and $10x\in S^{10}$ for any $x\in R$.  We again apply  (\ref{trf}) to the matrices $$A:=\begin{pmatrix}
t & \delta \\
-1 & 0 
\end{pmatrix}$$
for which we have 
$$ p_0(t,\delta):=tr(A^{10})=t^{10}-10t^8\delta+35t^6\delta^2-50t^4\delta^3+25t^2\delta^4-2\delta^5\in S^{10} ,$$
for all $t,\delta\in R$.  By setting $t=1,\delta=0$, we have $1\in S^{10}$ and hence $\mathbb{Z}\subseteq S^{10}$.  Also, $x^{10}=p_0(1,0)\in S^{10}$ and $2x^{5}=p_0(0,-x)\in S^{10}$ for any $x\in R$.  Thus, $$K_1:=\mathbb{Z}\cup \{x^{10i},2x^{5i}\mid x\in R, i\in \mathbb{N}\}\subseteq S^{10}.$$
By using $K_1$, we reduce $p_0(-1,x)$ to be $$p_1(x):=25x^4+50x^3+35x^2+10x\in S^{10},\;\;\;\hbox{ for all $x\in R$.}$$
By using $2(x+1)^5\in S^{10}$ and $K_1$, we have that
$$p_2(x):=10x^4+20x^3+20x^2+10x\in S^{10},\;\;\;\hbox{ for all $x\in R$.}$$
So, $p_3(x):=5p_2(x)-2p_1(x)=30x^2+30x\in S^{10}$, for all $x\in R$, which also implies that $p_3(x+1)-p_3(x)=60x\in S^{10}$, for all $x\in R$.  Thus
$$ K_2:=K_1\cup \{60x^i\;|\; x\in R, i\in \mathbb{N}\}\subseteq S^{10}. $$
By using $K_2 $ and $p_3(x)$, we reduce $p_1(x)$ to $p_4(x):=5x^4+10x^3-5x^2-10x\in S^{10}$ and thus $$p_5(x):=p_3(x)-p_3(-x)\equiv_{K_2} 20x^3+40x  $$
and $$p_6(x):=p_3(x)+p_3(-x)\equiv_{K_2} 10x^4-10x^2  $$
belong to $S^{10}$ for all $x\in R$.

Now, by using $p_0(x,-1)$ and $K_1$, we have $g(x):=10x^8+35x^6+10x^4+25x^2$ is in $S^{10}$ for all $x\in R$.  Then, $$g_1(x):=g(x)-p_6(x^2)-2p_5(x^2)=5x^6-20x^4-5x^2\in S^{10}\;\;\; \hbox{ for all $x\in R$}.$$Then,
\begin{eqnarray*}
	g_1(x+1)-g_1(x)&=&30x^5+55x^4+20x^3-50x^2-60x-20 \\
	&\equiv_{K_2}&15x^4+20x^3+15x^2:=g_2(x)\in S^{10}, \;\; \forall x\in R.
\end{eqnarray*}
This yields that $40x^3=g_2(x)-g_2(-x)\in S^{10}$ and thus $20x=2p_5(x)-40x^3-60x\in S^{10}$; namely $20x^i\in S^{10}$ for all $x\in R$ and $i\in \mathbb{N}$.  Moreover,  $$g_3(x):=20x^4+20x^2-g_2(x)-20x^3=5x^4+5x^2\in S^{10}, \;\; \forall x\in R.$$ 
Also, $10x^3=p_1(x)-5g_3(x)-p_3(x)+20x^2+20x-40x^3\in S^{10}$; namely $10x^{3i}\in S^{10}$ for all $x\in R$ and $i\in \mathbb{N}$.  Furthermore, by using $(x+1)^{10}\in S^{10}$ and $$K_3:=K_2\cup \{20x^i, 10x^{3i}, 5x^{4i}+5x^{2i}\;|\; x\in R, i\in \mathbb{N}\}\subseteq S^{10},$$  we see that 
\begin{eqnarray*}
	(x+1)^{10}&\equiv_{K_3}&10x^9+5x^8+10x^6+10x^4+5x^2+10x \\
	&\equiv_{K_3}&5x^8+10x^4+5x^2+10x \\
	&\equiv_{K_3}&10x;
\end{eqnarray*}
namely, $10x\in S^{10}$ for all $x\in R$.  By setting,
$$K_4:=K_3\cup \{10x^i\;|\; x\in R, i\in \mathbb{N}\}\subseteq S^{10},$$
we can deduce that  $p_0(t,\delta)\equiv_{K_4}5t^6\delta^2+5t^2\delta^4$. Replacing $\delta:=x^2$ and $t:=x+x^2$, for $x\in R$, we obtain that 
\begin{eqnarray*}
	5(x+x^2)^{6}x^4+5(x+x^2)^2x^8&\equiv_{K_4}&5x^{16}+5x^{14}:=f(x).
\end{eqnarray*}
So, 
\begin{eqnarray*}
	f(x+1)&\equiv_{K_4}&5[x^{16}+x^{14}+x^{12}+x^{10}+x^8+x^6+x^4+x^2] \\
	&\equiv_{K_4}&5x^2\in S^{10}\;\; \forall x\in R.
\end{eqnarray*}
Hence, $tr(M):=x_0^{10}-2x_1^5+5x_2^2+10x_3\in S^{10}$ for any $x_0,x_1,x_2,x_3\in R$ and the statement (1) is proved.

To prove the statement (3), we use the statement (2).   Precisely, if $tr(M) \mod 10R\in S_{10}$, then (by statement (2)) there exist $M_i\in M_2(R)$ such that $tr(M)=\sum_{i=1}^r tr(M_i^{10})$.  For each $i$, let $N_i:=M_i\oplus O_{n-2}\in M_n(R)$.  This yields that $tr(N_i^{10})=tr(M_i^{10})$ and that $tr(M)=\sum_{i=1}^r tr(N_i^{10})$.  By the equivalence of the statements (1) and (3) in Theorem \ref{basicthm3.1}, the proof is completed.

\subsection{Proof of Theorem \ref{maindegcomposite} for $k=14$}  We first show that $$S_{14}:=\{x_0^{14}-2x_1^7+7x_2^2 \mod 14 R\;|\; x_0,x_1,x_2\in R\}$$
is a group under the addition modulo $14R$ of the ring $R$.  It is clear that $0\in S_{14}$ and $-\alpha \mod 14R=13\alpha \mod 14 R$ for any $\alpha\in S_{14}$, then it remains to show that $S_{14}$ is closed.  We observe from the direct calculation in modulo $14R$ that, for each $x,y \in R$,
\begin{eqnarray*}
	(x+y)^{14} &\equiv &x^{14}+y^{14}+7(x^{12}y^2+x^2y^{12})+7(x^{10}y^4+x^4y^{10})+7(x^{8}y^6+x^6y^{8})+2x^7y^7 \\
	2(x+y)^{7} &\equiv &-2(x^{7}+y^{7})\\
	7(x+y)^{2}&\equiv &7(x^{2}+y^{2}).
\end{eqnarray*}
Let $\alpha:=x_0^{14}-2x_1^7+7x_2^2 \mod 14 R$ and $\beta:=y_0^{14}-2y_1^7+7y_2^2 \mod 14 R$ be in $S_{14}$.  By using the above relations, we compute that, in modulo $14 R$,
\begin{eqnarray*}
	\alpha+\beta  &\equiv &(x_0^{14}+y_0^{14})-2(x_1^7+y_1^7)+7(x_2^2+y_2^2) \\
	&\equiv &[(x_0+y_0)^{14}-2(x_0y_0+x_1y_1)^7 \\
	&+&7(x_0^6y_0+x_0y_0^6+x_0^5y_0^2+x_0^2y_0^5+x_0^4y_0^3+x_0^3y_0^4+x_2+y_2)^2
\end{eqnarray*}
which means that $\alpha+\beta\in S_{14}$ and thus $(S_{14},+)$ is a group as desired.

Now, suppose that $M\in M_2(R)$ is a sum of $14$-th powers of matrices in $M_2(R)$; namely $M=\sum_{i=1}^kM_i^{14}$ for some $k\in \mathbb{N}$ and $M_i\in M_2(R)$.  Then,  $tr(M) =\sum_{i=1}^ktr(M_i)$  and, by (\ref{trf}), for each $i\in\{1,\dots,k\}$,
\begin{eqnarray*}
	tr(M_i) \mod 14 R 	&\equiv &t_i^{14}-14t_i^{12}\delta_i+77t_i^{10}\delta_i^2-210t_i^8\delta_i^3+294t_i^6\delta_i^4-196t_i^4\delta_i^5+49t_i^2\delta_i^6-2\delta_i^7\\
	&\equiv&t_i^{14}+7t_i^{10}\delta_i^2+7t_i^2\delta_i^6-2\delta_i^7\\
	&\equiv&t_i^{14}-2\delta_i^7+7(t_i^5\delta_i+t_i\delta_i^3)^2,
\end{eqnarray*}
where $t_i$ and $\delta_i$ are trace and determinant of $M_i$ respectively.  So, $tr(M_i) \mod 14 R \in S_{14}$ for each $i\in\{1,\dots,k\}$.  Since $S_{14}$ is a group, $tr(M) \mod 14 R \in S_{14}$.

Conversely, suppose that $M\in M_2(R)$ with $tr(M)\mod 14R\in S_{14}$; namely $$tr(M):=x_0^{14}-2x_1^7+7x_2^2+14x_3$$ for some $x_0,x_1,x_2,x_3\in R$.  Let $ S^{14} $  be a subgroup of $(R,+)$ generated by the trace of 14-th powers of matrices in $M_2(R)$.  In order to show that $M$ is a sum of 14-th powers of matrices in $M_2(R)$, by the equivalent statements of (1) and (3) in Theorem \ref{basicthm3.1}, it suffices to show that $tr(M)\in S^{14}$.  By setting $$M_1:=\begin{pmatrix}
x_0 & 0 \\
-1 & 0 
\end{pmatrix} \hbox{ and } M_2:=\begin{pmatrix}
0 & x_1 \\
-1 & 0 
\end{pmatrix},$$ by (\ref{trf}), we have $tr(M_1^{14})=x_0^{14}$ and $tr(M_2^{14})=-2x_1^{7}$.  Since $(S^{14},+)$ is a group, it remains to show that $7x^2$ and $14x\in S^{14}$ for any $x\in R$.  We again apply  (\ref{trf}) to the matrices $$A:=\begin{pmatrix}
t & \delta \\
-1 & 0 
\end{pmatrix}$$
for which we have 
$$ p_0(t,\delta):=tr(A^{14})=t^{14}-14t^{12}\delta+77t^{10}\delta^2-210t^8\delta^3+294t^6\delta^4-196t^4\delta^5+49t^2\delta^6-2\delta^7\in S^{14} ,$$
for all $t,\delta\in R$.  By setting $t=1,\delta=0$, we have $1\in S^{14}$ and hence $\mathbb{Z}\subseteq S^{14}$.  Also, $x^{14}=p_0(1,0)\in S^{14}$ and $2x^{7}=p_0(0,-x)\in S^{14}$ for any $x\in R$.  Thus, $$K_1:=\mathbb{Z}\cup \{x^{14i},2x^{7i}\mid x\in R, i\in \mathbb{N}\}\subseteq S^{14}.$$
By using $K_1$, we reduce $p_0(-1,x)$ to be $$p_1(x):=49x^6+196x^5+294x^4+210x^3+77x^2+14x\in S^{14},\;\;\;\hbox{ for all $x\in R$.}$$
By using $2(x+1)^7\in S^{14}$ and $K_1$, we have that
$$p_2(x):=14x^6+42x^5+70x^4+70x^3+42x^2+14x\in S^{14},\;\;\;\hbox{ for all $x\in R$.}$$
We now calculate that
\begin{eqnarray*}
	p_3(x):=2p_1(x)-7p_2(x)&\equiv_{K_1}&98x^5+98x^4-70x^3-140x^2-70x\\
	p_5(x):=p_1(x)-p_1(-x)&\equiv_{K_1}&392x^5+420x^3+28x\\
	p_6(x):=p_2(x)-p_2(-x)&\equiv_{K_1}&84x^5+70x^3+28x\\
	p_7(x):=4p_3(x)-p_5(x)&\equiv_{K_1}&392x^4-700x^3-140x^2-308x\\
	p_8(x):=p_2(x+1)-p_2(x)&\equiv_{K_1}&84x^5+420x^4+980x^3+1260x^2+868x\\
	p_9(x):=p_8(x)-p_6(x)&\equiv_{K_1}&420x^4+910x^3+1260x^2+840x
\end{eqnarray*}
and further calculate that
\begin{eqnarray*}
	p_{10}(x):=p_9(x)-p_9(-x)+p_7(x)-p_7(-x)&\equiv_{K_1}&420x^3+1064x\\
	p_{11}(x):=p_3(x+1)-p_3(x)-p_9(x)&\equiv_{K_1}&70x^4+462x^3+98x^2-448x\\
	p_{12}(x):=p_{11}(x)+p_{11}(-x)&\equiv_{K_1}&140x^4+196x^2\\
	p_{13}(x):=p_{11}(x+1)-p_{11}(-x)&\equiv_{K_1}&280x^3+1806x^2+1862x\\
	p_{14}(x):=p_{13}(x+1)-p_{13}(-x)&\equiv_{K_1}&840x^2+4452x
\end{eqnarray*}
and again further calculate that
\begin{eqnarray*}
	p_{15}(x):=4p_{14}(x)-[p_{14}(x^2)-6p_{12}(x)]&\equiv_{K_1}&84x^2+17808x\\
	p_{16}(x):=p_{15}(x+1)-p_{15}(x)&\equiv_{K_1}&168x\\
	p_{17}(x):=p_{15}(x)-106p_{16}(x)&\equiv_{K_1}&84x^2\\
	p_{18}(x):=p_{14}(x)-10p_{17}(x)-26p_{16}(x)&\equiv_{K_1}&84x\\
	p_{19}(x):=p_{10}(x)-6[p_{6}(x)-p_{18}(x^5)]-10p_{18}(x)&\equiv_{K_1}&56x
\end{eqnarray*}
which we can conclude that $28x=p_{18}(x)-p_{19}(x)$ belongs to $S^{14}$ for all $x\in R$.  Let $K_2:=K_1\cup\{28x^i\;|\;x\in R, i\in \mathbb{N}\}$.  Then $K_2\subseteq S^{14}$ and $-p_1(x)$ is reduced to (by $K_2$)
$$g_1(x):=7x^6+14x^4-7x^2+14x\in S^{14},\;\; \forall x\in R,  $$
and $p_{13}(x)$ is reduced to
$$g_2(x):=14x^2+14x \in S^{14},\;\; \forall x\in R$$
and $p_6(x)$ is reduced to 
$$g_3(x):=14x^3 \in S^{14},\;\; \forall x\in R.$$
Then, $7x^6-7x^2=g_1(x)-g_2(x^2)-g_2(x)\in S^{14}$ and $7x^6+7x^2=g_3(x^2)-(7x^6-7x^2)\in S^{14}$ for all $x\in R$.  Thus, $14x^2\in S^{14}$  for all $x\in R$.  By $g_2(x)$, we can reach to the conclusion that $14x\in S^{14}$ for all $x\in R$. 

Now, it remains to show that $7x^2\in S^{14}$ for all $x\in R$, for which we can  compute in modulo $$K_3:=K_2\cup\{14x^{i}\;|\;x\in R, i\in \mathbb{N}\}\subseteq S^{14}.$$
Under $K_3$, $p_0(t,\delta)$ is reduced to
$$f(t,\delta) :=7[t^{10}\delta^2+7t^2\delta^6]\in S^{14},\;\; \forall t,\delta\in R. $$
Then, 
\begin{eqnarray*}
	f(x+x^2,x)&\equiv_{K_3}& 7[x^{22}+x^{20}+x^{14}+x^{12}]\\
	&\equiv_{K_3}& 7[x^{22}+x^{20}+x^{12}]\in S^{14} \;\; \forall x\in R
\end{eqnarray*}
and
$$f(x^2,x) \equiv_{K_3}7[x^{22}+x^{20}]\in S^{14} \;\; \forall x\in R.$$
Hence $$f(x+x^2,x)+f(x^2,x)\equiv_{K_3} 7x^{12}\equiv_{K_3} 7x^6\in S^{14}\;\; \forall x\in R.$$
Since $7x^6-7x^2\in S^{14}$, we can now conclude that $7x^2\in S^{14}$ for all $x\in R$, which completes the proof for the statement (2). The statement (3) follows by using the same arguments as in that proof of the case $k=10$.

\subsection{Proof of Theorem \ref{maindegcomposite} for $k=15$}  We first show that $$S_{15}:=\{x_0^{15}-3x_1^5+5x_2^3 \mod 15 R\;|\; x_0,x_1,x_2\in R\}$$
is a group under the addition modulo $15R$ of the ring $R$.  It is clear that $0\in S_{15}$ and $-\alpha \mod 15R=14\alpha \mod 15 R$ for any $\alpha\in S_{15}$, then it remains to show that $S_{15}$ is closed.  We observe from the direct calculation in modulo $15R$ that, for each $x,y \in R$,
\begin{eqnarray*}
	(x+y)^{15} &\equiv &x^{15}+y^{15}+5((x^{4}y)^3+(xy^4)^3)+3((x^{2}y)^5+(xy^2)^5)+10((x^{3}y^2)^3+(x^2y^3)^3)\\
	3(x+y)^{5} &\equiv &3(x^{5}+y^{5})\\
	5(x+y)^{3}&\equiv &5(x^{3}+y^{3}).
\end{eqnarray*}
Let $\alpha:=x_0^{15}-3x_1^5+5x_2^3 \mod 15 R$ and $\beta:=y_0^{15}-3y_1^5+5y_2^3 \mod 15 R$ be in $S_{15}$.  By using the above relations, we compute that, in modulo $15 R$,
\begin{eqnarray*}
	\alpha+\beta  &\equiv &(x_0^{15}+y_0^{15})-3(x_1^5+y_1^5)+5(x_2^3+y_2^3) \\
	&\equiv &(x_0+y_0)^{15}-3(x_0^2y_0+x_0y^2+x_1+y_1)^5 \\
	&+&5(x_0^3y_0^2+x_0^2y_0^3-x_0^4y_0-x_0y_0^4+x_2+y_2)^3
\end{eqnarray*}
which means that $\alpha+\beta\in S_{15}$ and thus $(S_{15},+)$ is a group as desired.

Now, suppose that $M\in M_2(R)$ is a sum of $15$-th powers of matrices in $M_2(R)$; namely $M=\sum_{i=1}^kM_i^{15}$ for some $k\in \mathbb{N}$ and $M_i\in M_2(R)$.  Then,  $tr(M) =\sum_{i=1}^ktr(M_i)$  and, by (\ref{trf}), for each $i\in\{1,\dots,k\}$, (in modulo $15R$),
\begin{eqnarray*}
	tr(M_i) &\equiv &t_i^{15}-15t_i^{13}\delta_i+90t_i^{11}\delta_i^2-275t_i^9\delta_i^3+450t_i^7\delta_i^4-378t_i^5\delta_i^5+140t_i^3\delta_i^6-15t\delta_i^7\\
	&\equiv&t_i^{15}-5t_i^{9}\delta_i^3-3t_i^5\delta_i^5+5t_i^3\delta_i^6\\
	&\equiv&t_i^{15}-3(t_i\delta_i)^5+5(t_i\delta_i^2-t_i^{3}\delta_i)^3
\end{eqnarray*}
where $t_i$ and $\delta_i$ are trace and determinant of $M_i$ respectively.  So, $tr(M_i) \mod 15 R \in S_{15}$ for each $i\in\{1,\dots,k\}$.  Since $S_{15}$ is a group, $tr(M) \mod 15 R \in S_{15}$.

Conversely, suppose that $M\in M_2(R)$ with $tr(M)\mod 15R\in S_{15}$; namely $$tr(M):=x_0^{15}-3x_1^5+5x_2^3+15x_3$$ for some $x_0,x_1,x_2,x_3\in R$.  Let $ S^{15} $  be a subgroup of $(R,+)$ generated by the trace of 15-th powers of matrices in $M_2(R)$.  In order to show that $M$ is a sum of 15-th powers of matrices in $M_2(R)$, by the equivalent statements of (1) and (3) in Theorem \ref{basicthm3.1}, it suffices to show that $tr(M)\in S^{15}$.  By setting $$M_1:=\begin{pmatrix}
x_0 & 0 \\
-1 & 0 
\end{pmatrix}$$
for which we have 
$$ p_0(t,\delta):=tr(A^{15})=t^{15}-15t^{13}\delta+90t^{11}\delta^2-275t^9\delta^3+450t^7\delta^4-378t^5\delta^5+140t^3\delta^6-15t\delta^7\in S^{15} ,$$
for all $t,\delta\in R$.  By setting $t=1,\delta=0$, we have $1\in S^{15}$ and hence $\mathbb{Z}\subseteq S^{15}$.  Also, $x^{15}=p_0(1,0)\in S^{15}$ for any $x\in R$.  Thus, $$K_1:=\mathbb{Z}\cup \{x^{15i}\mid x\in R, i\in \mathbb{N}\}\subseteq S^{15}.$$
By using $K_1$, we reduce $p_0(-1,x)$ to be $$p_0(x):=15x^7+140x^6+378x^5+450x^4+275x^3+90x^2+15x\in S^{15},\;\;\;\hbox{ for all $x\in R$.}$$
We now calculate that
\begin{eqnarray*}
	p_1(x):=p_0(x+1)-p_0(x)&\equiv_{K_1}&105x^6+1155x^5+4515x^4+8905x^3+9720x^2+5640x\\
	p_2(x):=p_1(x)+p_1(-x)&\equiv_{K_1}&210x^6+9030x^4+19440x^2\\
	p_3(x):=p_2(x+1)-p_2(x)&\equiv_{K_1}&30[42x^5+105x^4+1344x^3+1911x^2+2542x]\\
	p_4(x):=p_3(x)+p_3(-x)&\equiv_{K_1}&60[105x^4+1911x^2]\\
	p_5(x):=p_4(x+1)-p_4(x)&\equiv_{K_1}&60\cdot42[10x^3+15x^2+101x]\\
	p_6(x):=p_5(x)+p_5(-x)&\equiv_{K_1}&120\cdot42[15x^2]
\end{eqnarray*}
and further calculate that
\begin{eqnarray*}
	p_{7}(x):=12p_4(x)-p_6(x^2)-18p_6(x)&\equiv_{K_1}&720\cdot21x^2\\
	p_{8}(x):=p_2(x)-p_7(x^3)-43p_7(x^2)-92p_7(x)&\equiv_{K_1}&720\cdot 12x^2\\
	p_{9}(x):=2p_{8}(x)-p_{7}(x)&\equiv_{K_1}&36\cdot 60x^2\\
	p_{10}(x):=3p_{9}(x^2)-p_{4}(x)+53p_9(x)&\equiv_{K_1}&180x^4-180x^2\\
	p_{11}(x):=2[p_{0}(x)+p_{0}(-x)+p_{10}(x)]-p_9(x^2)&\equiv_{K_1}&560x^6
\end{eqnarray*}
and again further calculate that
\begin{eqnarray*}
	p_{12}(x):=4p_{11}(x)-p_{9}(x^3)&\equiv_{K_1}&80x^6\\
	p_{13}(x):=p_{2}(x)-9p_{9}(x)-4p_9(x^2)-2p_{12}(x)&\equiv_{K_1}&390x^4+50x^6\\
	p_{14}(x):=8p_{13}(x)-5p_{12}(x)&\equiv_{K_1}&240\cdot 13x^4\\
	p_{15}(x):=3p_{9}(x^2)-2p_{14}(x)&\equiv_{K_1}&240x^4\\
	p_{16}(x):=p_{13}(x)-p_{15}(x)&\equiv_{K_1}&150x^4+50x^6
\end{eqnarray*}
which we still further calculate that
\begin{eqnarray*}
	p_{17}(x):=[p_{0}(x)+p_{0}(-x)]-6p_{16}(x)+p_{12}(x)&\equiv_{K_1}&60x^6+180x^2\\
	p_{18}(x):=4p_{17}(x)-3p_{12}(x)&\equiv_{K_1}&540x^2\\
	p_{19}(x):=p_{12}(x)-[p_{18}(x^2)-2p_{15}(x)]&\equiv_{K_1}&20x^4\\
	p_{20}(x):=9p_{19}(x)-p_{10}(x)&\equiv_{K_1}&180x^2\\
	p_{21}(x):=7p_{12}(x)-p_{18}(x^2)&\equiv_{K_1}&20x^6
\end{eqnarray*}
and that
\begin{eqnarray*}
	p_{22}(x):=p_{21}(x+1)-p_{21}(x)&\equiv_{K_1}&120x^5+400x^3+300x^2+120x\\
\end{eqnarray*}
which yields
\begin{eqnarray*}
	p_{23}(x):=p_{22}(x)+p_{22}(-x)-p_{18}(x)&\equiv_{K_1}&60x^2\\
	p_{24}(x):=p_{23}(x+1)-p_{23}(x)]&\equiv_{K_1}&120x\\
	p_{25}(x):=p_{22}(x)-p_{24}(x^5)-3p_{24}(x^3)-5p_{23}(x)-p_{24}(x)&\equiv_{K_1}&40x^3
\end{eqnarray*}
belong to $S^{15}$ for all $x\in R$.  Moreover, by using $$K_2:=K_1\cup\{120x^i,60x^{2i},40x^{3i},20x^{4i},20x^{6i}\;|\;x\in R, i\in \mathbb{N}\}\subseteq S^{15},$$
we have that
\begin{eqnarray*}
	p_{0}(x)-p_{0}(-x)&\equiv_{K_2}&30x^7+756x^5+550x^3+30x\\
	&\equiv_{K_2}&30x^7+36x^5+10x^3+30x =:p_{26}(x)\in S^{15}\;\; \forall x\in R.
\end{eqnarray*}
Then, 
\begin{eqnarray*}
	p_{27}(x):=p_{26}(2x)&\equiv_{K_2}&72x^5+60x\\
	p_{28}(x):=p_{0}(2x)&\equiv_{K_2}&96x^5+30x\\
	p_{29}(x):=p_{28}(x)-p_{27}(x)&\equiv_{K_2}&24x^5-30x\\
	p_{30}(x):=p_{29}(5x)&\equiv_{K_2}&30x\\
	p_{31}(x):=p_{25}(x)-p_{30}(x^3)&\equiv_{K_2}&10x^3
\end{eqnarray*} 
and  
$$p_{32}(x):=p_{26}(x)-p_{30}(x^7)-p_{30}(x^5)-p_{31}(x)-p_{30}(x)\equiv_{K_2}6x^5, $$
which implies that $$K_3:=K_2\cup\{30x^i,10x^{3i},6x^{5i}\;|\; x\in R, i\in \mathbb{N}\}  \subseteq S^{15}.$$
By using $K_3\subseteq S^{15}$, $p_0(t,\delta)$ is reduced to
$$ f(t,\delta):=15[t^{13}\delta+t^9\delta^3+t\delta^7]\in S^{15}, \;\; \forall x\in R. $$
So, 
\begin{eqnarray*}
	f(1,x)&\equiv_{K_3}&15[x^7+x^3+x]\\
	f(x,1)&\equiv_{K_3}&15[x^{13}+x^9+x]\\
	f(x,x)&\equiv_{K_3}&15[x^{14}+x^{12}+x^8]\\
	g_0(x):=(x+1)^{15}&\equiv_{K_3}&15[x^{12}+x^{11}+x^{10}+x^8+x^7+x^5+x^4+x^3]
\end{eqnarray*} 		
and then	
\begin{eqnarray*}
	g_1(x):=f(1,x+1)-f(1,x
	)&\equiv_{K_3}&15[x^6+x^5+x^4+x^3]\\
	g_2(x):=g_0(x)-g_1(x^2)-g_1(x)	&\equiv_{K_3}&15[x^{11}+x^7]\\
	g_3(x)=	f(x,x)+f(1,x^2)&\equiv_{K_3}&15[x^{12}+x^{8}+x^6+x^2]\\
	g_4(x):=g_3(x+1)-g_3(x)&\equiv_{K_3}&15[x^8
	+x^2]\\
	g_5(x):=g_3(x)-g_4(x)&\equiv_{K_3}&15[x^{12}
	+x^6]\\
	g_7(x):=g_1(x^2)-g_5(x)&\equiv_{K_3}&15[x^{10}
	+x^8]
\end{eqnarray*} 		
belong to $S^{15}$ for all $x\in R$.  Moreover,
\begin{eqnarray*}
	g_9(x):=g_2(x+1)-g_2(x)-g_1(x)&\equiv_{K_3}&15[x^{10}+x^9+x^8+x^3]\\
	g_{10}(x):=g_9(x)-g_7(x)&\equiv_{K_3}&15[x^{9}+x^3]\\
	g_{11}(x)=g_1(x^3)&\equiv_{K_3}&15[x^{18}+x^{12}+x^9]\\
	g_{12}(x):=g_{11}(x)-g_{10}(x^2)+g_5(x)&\equiv_{K_3}&15[x^9],
\end{eqnarray*} 		
which implies that $15x^3=g_{10}(x)-g_{10}(x)\in S^{15}$ for all $x\in R$.  Since $10x^3\in S^{15}$ for all $x\in R$, we now conclude that $$5x^3\in S^{15},\;\; \forall x\in R.$$
This also implies that $15[x^2+x]\equiv_{K_3} 5(x+1)^3-5x^3\in S^{15}$ for all $x\in R$.  Then
$$ K_4:=K_3\cup\{ 5x^{3i}, 15[x^i+x^{2i}]\;|\; x\in R, \in \mathbb{N}\}\subseteq S^{15}.$$
By using $K_4$, $f(t,\delta)$ is reduced to $h(t,\delta):=15[t^{13}\delta+t\delta^7]\in S^{15}$ and $f(1,x)+f(x,1)$ is reduced to $h_1(x):=15[x^{13}+x^7]\in S^{15}$ for all $t,\delta,x\in R$.  So,
$$ h(x,x^3+1) \equiv_{K_4} 15[x^{22}+x^{16}+x^{15}+x^{13}+x^{8}+x] \equiv_{K_4} 15[x^{13}+x^{11}+x]\in S^{15}.$$
Thus, $$15x\equiv_{K_4} h(x,x^3+1)+h_1(x)-g_2(x)\in S^{15},\; \; \forall x\in R.$$
Finally, since $6x^5\in S^{15}$ and $15x^5\in S^{15}$, $$3x^5=15x^5-2(6x^5) \in S^{15},\; \; \forall x\in R, $$
which completes the proof for the statement (2). The statement (3) follows by using the same arguments as in that proof of the case $k=10$.

\subsection{Proof of Theorem \ref{maindegcomposite} for $k=12$}  We first show that $$S_{12}:=\{x_0^{12}+2x_1^6-3x_2^4-4x_3^3+6x_4^2 \mod 12 R\;|\; x_0,x_1,x_2,x_3,x_4\in R\}$$
is a group under the addition modulo $12R$ of the ring $R$.  It is clear that $0\in S_{12}$ and $-\alpha \mod 12R=11\alpha \mod 12 R$ for any $\alpha\in S_{12}$, then it remains to show that $S_{12}$ is closed.  We observe from the direct calculation in modulo $12R$ that, for each $x,y \in R$,
\begin{eqnarray*}
	(x+y)^{12} &\equiv &(x^{12}+y^{12})+6(x^{10}y^2+x^2y^{10})+4(x^{9}y^3+x^3y^9)+3(x^{8}y^4+x^4y^8)\\
	x+y)^{6} &\equiv &(x^{6}+y^{6})+6(x^5y+xy^5) -4(xy)^3+3(x^4y^2+x^2y^4)\\
	(x+y)^{4} &\equiv &(x^{4}+y^{4})+6(xy)^2+4(x^{3}y+xy^{3})\\
	(x+y)^{3}&\equiv &(x^{3}+y^{3})+3(x^{2}y+xy^{2})\\
	(x+y)^{2}&\equiv &(x^{2}+y^{2})+2(xy)
\end{eqnarray*}
Let $\alpha:=x_0^{12}+2x_1^6-3x_2^4-4x_3^3+6x_4^2  \mod 12 R$ and $\beta:=y_0^{12}+2y_1^6-3y_2^4-4y_3^3+6y_4^2 \mod 12 R$ be in $S_{12}$.  By using the above relations, we compute that, in modulo $12 R$,
\begin{eqnarray*}
	\alpha+\beta  &\equiv &(x_0^{12}+y_0^{12})+2(x_1^6+y_1^6)-3(x_2^4+y_2^4)-4(x_3^3+y_3^3)+6(x_4^2+y_4^2)\\
	&\equiv &(x_0^{12}+y_0^{12})+2(x_1^6+y_1^6)-3(x_2+y_2+x_0^2y_0+x_0y_0^2)^4-4(x_0^3y_0+x_0y_0^3+x_1y_1+x_3+y_3) ^3\\
	&+&6[x_0^5y_0+x_0y_0^5+x_1^2y_1+x_1y_1^2+(x_2+y_2)(x_0^2y_0+x_0y_0^2)+x_0^3y_0^3+x_2y_2+x_4+y_4]^2
\end{eqnarray*}
which means that $\alpha+\beta\in S_{12}$ and thus $(S_{12},+)$ is a group as desired.

Now, suppose that $M\in M_2(R)$ is a sum of $12$-th powers of matrices in $M_2(R)$; namely $M=\sum_{i=1}^kM_i^{12}$ for some $k\in \mathbb{N}$ and $M_i\in M_2(R)$.  Then,  $tr(M) =\sum_{i=1}^ktr(M_i)$  and, by (\ref{trf}), for each $i\in\{1,\dots,k\}$, (in modulo $12R$),
\begin{eqnarray*}
	tr(M_i) &\equiv &t_i^{12}-12t_i^{10}\delta_i+54t_i^{8}\delta_i^2-112t_i^6\delta_i^3+105t_i^4\delta_i^4-36t_i^2\delta_i^5+2\delta_i^6\\
	&\equiv&t_i^{12}+6t_i^{8}\delta_i^2-4t_i^6\delta_i^3-3t_i^4\delta_i^4+2\delta_i^6\\
	&\equiv&t_i^{12}+2\delta_i^6-4(t_i^2\delta_i)^3-3(t_i\delta_i)^4+6(t_i^{4}\delta_i)^2
\end{eqnarray*}
where $t_i$ and $\delta_i$ are trace and determinant of $M_i$ respectively.  So, $tr(M_i) \mod 12 R \in S_{15}$ for each $i\in\{1,\dots,k\}$.  Since $S_{12}$ is a group, $tr(M) \mod 12 R \in S_{12}$, which completes the proof for (4).

To prove the statement (5), it suffices to  calculate on $M\in M_2(R)$ with $tr(M)\in S^*_{12}$; namely $$tr(M):=x_0^{12}+2x_1^6+3x_2^4+8x_3^3+12x_4^2+24x_5$$ or
$$tr(M):=x_0^{12}+2x_1^6+3x_2^4+8x_3^3+4x_4^{2m+1}+6x_4^2+12x_4,$$
for some $x_0,x_1,x_2,x_3,x_4\in R$ and $m\in \mathbb{N}$.   Let $ S^{12} $  be a subgroup of $(R,+)$ generated by the trace of 12-th powers of matrices in $M_2(R)$.  In order to show that $M$ is a sum of 12-th powers of matrices in $M_2(R)$, by the equivalent statements of (1) and (3) in Theorem \ref{basicthm3.1}, it suffices to show that $tr(M)\in S^{12}$.  By setting $$M_1:=\begin{pmatrix}
x_0 & 0 \\
-1 & 0 
\end{pmatrix} \hbox{ and } M_2:=\begin{pmatrix}
0 & x_1 \\
-1 & 0 
\end{pmatrix},$$ by (\ref{trf}), we have $tr(M_1^{12})=x_0^{12}$ and $tr(M_2^{12})=2x_1^{6}$ .  Since $(S^{12},+)$ is a group, it remains to show that $3x^4, 8x^3,12x^2, 24x$ and $4x^{2m+1}+6x^2+12x$ belong to $ S^{12}$  for any $x\in R$ and $m\in \mathbb{N}$.  We again apply  (\ref{trf}) to the matrices $$A:=\begin{pmatrix}
t & \delta \\
-1 & 0 
\end{pmatrix}$$
for which we have 
$$ p_0(t,\delta):=tr(A^{12})=t^{12}-12t^{10}\delta+54t^{8}\delta^2-112t^6\delta^3+105t^4\delta^4-36t^2\delta^5+2t\delta^6\in S^{12} ,$$
for all $t,\delta\in R$.  By setting $t=1,\delta=0$, we have $1\in S^{12}$ and hence $\mathbb{Z}\subseteq S^{12}$.  Also, $x^{12}=p_0(1,0)$ and $2x^6=p_0(0,x)$ are in $S^{12}$ for any $x\in R$.  Thus, $$K_1:=\mathbb{Z}\cup \{x^{12i}, 2x^{6i}\mid x\in R, i\in \mathbb{N}\}\subseteq S^{12}.$$
By using $K_1$, we reduce $p_0(1,-x)$ to be $$p_0(x):=36x^5+105x^4+112x^3+54x^2+12x\in S^{12},\;\;\;\hbox{ for all $x\in R$.}$$
We now calculate that
\begin{eqnarray*}
	p_1(x):=p_0(x)-p_0(x)&\equiv_{K_1}&210x^4+108x^2\\
	p_2(x):=2(x+1)^6-2x^6&\equiv_{K_1}&12x^5+30x^4+40x^3+30x^2+12x\\
	p_3(x):=p_0(x)-3p_2(x)&\equiv_{K_1}&15x^4-8x^3-36x^2-24x\\
	p_4(x):=p_1(x)-14p_3(x)&\equiv_{K_1}&112x^3+612x^2+24x\\
	p_5(x):=p_3(x)+p_3(-x)&\equiv_{K_1}&30x^4-72x^2\\
	p_6(x):=p_1(x)-7p_5(x)&\equiv_{K_1}&612x^2
\end{eqnarray*}
and further calculate that
\begin{eqnarray*}
	p_{7}(x):=p_3(-x)-p_3(x)&\equiv_{K_1}&16x^3+48x\\
	p_{8}(x):=-[p_4(x)-7p_7(x)-p_6(x)]&\equiv_{K_1}&312x\\
	p_{9}(x):=2p_{8}(x^2)-p_{6}(x)&\equiv_{K_1}&12x^2\\
	p_{10}(x):=p_{9}(x+1)-p_{9}(x)&\equiv_{K_1}&24x\\
	p_{11}(x):=p_7(x)-2p_{10}(x)&\equiv_{K_1}&16x^3\\
	p_{12}(x):=p_1(x)-9p_{9}(x)-17p_9(x^2)&\equiv_{K_1}&6x^4
\end{eqnarray*}
are in $S^{12}$ for all $x\in R$.  Since $p_0(x,-1)=12x^{10}+54x^8+112x^6+105x^4+36x^2$, 
$$p_0(x,-1)-p_9(x^5) -9p_{12}(x^2)-8p_{11} (x^2)-3p_9(x)-17p_{12}(x)=3x^4\in S^{12}\;\; \forall x\in R;$$
which also implies that $$8x^3=5(3x^4)-p_3(x)+3p_9(x)+p_{10}(x)  $$  Now, we have $24x$, $12x^2$, $8x^3$  and $3x^4$ are in $S^{12}$ for all $x\in R$.   So, it remains to show that  $4x^{2m+1}+6x^2+12x$ are in $S^{12}$ for all $x\in R$ and $m\in \mathbb{N}$.  To do this, we calculate under modulo
$$ K_2:=K_1\cup \{3x^{4i},8x^{3i},12x^{2i},24x^i\;|\; x\in R, i\in \mathbb{N}\} \subseteq S^{12}.$$
We observe  that $$p_0(x+1)-p_0(x)=180x^4+780x^3+1326x^2+1044x\equiv_{K_2} 4x^3+6x^2+12x:=g_0(x) \in S^{12},\;\; \forall x\in R.$$
Then $$g_0(x+y)\equiv_{K_2} 12[x^2y+xy^2+xy]:=f(x,y)\in S^{12},\;\; \forall x,y\in R.$$  In particular, when $k\geq 2$, 
$$f(x^2,x^{2k-3}) =12[x^{2k+1}+x^2y^{2(2k-3)}+x^{2k-1}] \equiv_{K_2} 12[x^{2k+1}+x^{2k-1}]:=h(x,k)$$ is in $S^{12}$ for all $x\in R$.  Hence, for each $m\in \mathbb{N}$ such that $m\geq 2$,
$$ 4x^{2m+1}+4x^3 \equiv_{K_2}12[x^{2m+1}+x^3]\equiv_{K_2}h(x,2)+\cdots+h(x,m),$$
which yields that  $$4x^{2m+1}+6x^2+12x \equiv_{K_2}   (4x^{2m+1}+4x^3)+g_0(x),  \in S^{12}\;\; \forall x\in R.$$
Equipped with  $4x^3+6x^2+12x\in S^{12}$, we can now conclude that, for each $m\in \mathbb{N}$,
$4x^{2m+1}+6x^2+12x$ are in $S^{12}$ for all $x\in R$, which completes the proof.

\begin{rem}\label{remarkofdegree12}
	By using $K_2$, $p(t,\delta)$ is reduced to $$f(t,\delta):=12t^{10}\delta+6t^8\delta^2+12t^2\delta^5\in S^{12},$$ for all $t,\delta\in R$.  However, (by trying with so many polynomials $g_1(x),g_2(x)\in R[x]$ ), it seems that $f(g_1(x),g_2(x))\mod K_2$ is in the additive group generated by  $g_0(x)$ and $h(x,k)$, $k\geq 2$.
\end{rem}

\subsection{Proof of Theorem \ref{degprime} for $p=11$}  Due to Theorem \ref{lem39}, the statements (1) implies (2). To show (2) $\longrightarrow$ (1), by Theorem \ref{bthm35}, it suffices to concentrate on matrices in $M_2(R)$.  Let $M\in M_2(R)$.  To show that (1) holds, we use the equivalent statements of (1) and (3) in Theorem \ref{basicthm3.1}.  Let $$S^{11}:=\left< tr(A^{11})\mid A\in M_2(R) \right> \leq R$$ 
be the (additive) subgroup of $R$ generated by traces of $11$-th powers of matrices in $M_2(R)$.  We need to show that $tr(M):=\alpha_0\in S^{11}$.  By the assumption (2),  there exist $\alpha_1, \alpha_2\in R$ such that $$\alpha_0=\alpha_1^{11}+11\alpha_2.$$
Note that $$\alpha_1^{11}=tr(\begin{pmatrix}
\alpha_1 & 0 \\
0 & 0 
\end{pmatrix}^{11})\in S^{11}.$$  Then, in order to show that $\alpha_0\in S$, it suffices to show that $11\alpha\in S$, for each $\alpha\in R$. 

Now, for any $t, \delta\in R$, we have $A(t,\delta):=\begin{pmatrix}
t & \delta \\
-1 & 0 
\end{pmatrix}\in M_2(R)$.  By (\ref{trf}), we compute directly that
\begin{equation}\label{trA11}
tr(A(t,\delta)^{11})=t^{11}-11[t^{9}\delta-4t^{7}\delta^2+7t^{5}\delta^3-5t^3\delta^4+t\delta^5] \in S^{11}, \;\forall t,\delta\in R.
\end{equation}
When $t=1$ and $\delta=0$, we have $1=tr(A(1,0)^{11})\in S$ and thus $\mathbb{Z}\subseteq S^{11}$ (because $S^{11}$ is a group). 
By setting $t=-1, \delta=x$ (in (\ref{trA11})) and $x^{11}\in S^{11}$ equipped with $\mathbb{Z}\subseteq S^{11}$ , we have that 
$$p_0(x):=11[x^5-5x^4+7x^3-4x^2+x] \in S^{11}, \;\forall x\in R.$$
Then $p_1(x):=-[p_0(x)+p_0(-x)]=(2\cdot 11)[5x^4+4x^2]\in S^{11}$ and $p_2(x):=p_0(x)-p_0(-x)]=(2\cdot 11)[x^5+7x^3+x]\in S^{11}$ for all $x\in R$.  So, using $\mathbb{Z}\subseteq S^{11}$, we calculate that 
$$p_3(x):=(p_2(x+1)-p_2(x))-p_1(x)\equiv (2\cdot 11)[10x^3+27x^2+26x]\in S^{11}$$ for all $x\in R$.  This implies that   
$$p_4(x):=p_3(x+1)-p_3(x)\equiv (2\cdot 11)[30x^2+84x]\in S^{11}$$ and
$$p_5(x):=p_3(x)+p_3(-x)= (2\cdot 11)[54x^2]\in S^{11}$$   for all $x\in R$.  So, both $$ p_4(x)-p_4(-x) =4\cdot 11\cdot 84x\;\; \hbox{ and }\;\; p_5(x+1)-p_5(x)\equiv 4\cdot 11\cdot 54x$$  belong to $S^{11}$ for all $x\in R$.  Note that $$\gcd(4\cdot 11\cdot 84x,4\cdot 11\cdot 54x)=24\cdot 11x$$ and that doing Euclidean algorithm for $4\cdot 11\cdot 84x$ and $4\cdot 11\cdot 54x$ produces an element in $S^{11}$.  So, 
$24\cdot 11x \in S^{11}$ for all $x\in R$.   Thus, by using this fact together with $p_2(6x)\in S^{11}$  for all $x\in R$, we conclude that
$$12\cdot 11x \in S^{11},\;\; \forall x\in R.  $$
Since, $p(t,\delta):=tr(A(-t,\delta)^{11})\equiv 11[t^{9}\delta-4t^{7}\delta^2+7t^{5}\delta^3-5t^3\delta^4+t\delta^5]\in S^{11}$, by using $12\cdot 11x\in S^{11}$,  we get that
$$  p(3,2x)\equiv 2\cdot11\cdot 3^9x\in S^{11}, \;\; \forall x\in R.$$ 
Thus, $$6\cdot 11x=\gcd(12\cdot 11x , 2\cdot11\cdot 3^9x)\in S^{11}, \;\; \forall x\in R.  $$
By using $6\cdot 11 x^i\in S^{11}$ for all $x\in R$ and $i\in \mathbb{N}$, $p_0(x)$ is reduced to $$g_0(x):=11[x^5+x^4+x^3+2x^2+x] \in S^{11}, \;\; \forall x\in R, $$
and $p(x,1)$  is reduced to $$h_0(x):=11[x^9+2x^7+x^5+x^3+x] \in S^{11}, \;\; \forall x\in R.$$
Then, 
$$g_1(x):=g_0(x+2)-g_0(x+3)=11[x^4-x^2] $$ and $$g_2(x):=g_0(x+1)-g_0(x)+g_1(x)=11[2x^3+4x]\equiv 11[2x^3-2x]  $$
belong to $S^{11}$ for all $x\in R$.   Since $$K_1:=\mathbb{Z}\cup\{ 6\cdot 11 x^i, 11[x^{4i}-x^{2i}], 11[2x^{3i}-2x^{i}]\;|\; x\in R, i\in \mathbb{N}\}\subseteq S^{11},$$ we compute that
\begin{eqnarray*}
	h_1(x):=h_0(x+1)-h_0(x)&\equiv_{K_1}&11[3x^8+2x^6+3x^4+2x^3+x^2+x] \\
	&\equiv_{K_1}&11[2x^6+2x^3+x^2+x]\\
	&\equiv_{K_1}&11[2x^3+3x^2+x]\\
	&\equiv_{K_1}&11[3x^2+3x]\in S^{11}, \;\; \forall x\in R.
\end{eqnarray*}
We also compute that,$$p(x,2)\equiv_{K_1}11[2x^9+2x^7+2x^5-2x^3+2x] $$
and thus $2h_0(x)-p(x,2)\equiv_{K_1}11[2x^7+4x^3]\equiv_{K_1} 11[2x^7+4x]=:h_2(x)\in S^{11}$ for all $x\in R$.  Now, we have that
\begin{eqnarray*}
	h_3(x):=h_2(x+1)-h_2(x)&\equiv_{K_1}&11[2x^6+2x^4+2x^3+2x]\\
	&\equiv_{K_1}&11[2x^4+2x^3+2x^2+2x]\\
	&\equiv_{K_1}&11[2x^3+4x^2+2x]\\
	&\equiv_{K_1}&11[4x^2+4x]\in S^{11}\;\; \forall x\in R.
\end{eqnarray*}
Then $h_4(x):=h_3(x)-h_1(x)\equiv_{K_1}11[x^2+x]\in S^{11}$ for all $x\in R$ and hence $$ 2\cdot 11x\equiv_{K_1} h_4(x+1)-h_4(x)\in S^{11} \;\;\hbox{ for all } x\in R.$$
Let $K_2:=K_1\cup \{2\cdot 11x^i, 11[x^i+x^{2i}]\;|\; x\in R, i\in \mathbb{N}\}$.  Then $K_2\subseteq S^{11}$ and $$p(t,\delta)\equiv_{K_2} 11[t^9\delta+t^5\delta^3+t^3\delta^4+t\delta^5]:=f(t,\delta).$$
Based on $K_2\subseteq S^{11}$, we compute that:
\begin{eqnarray*}
	f(1,x) &\equiv_{K_2} & 11[x+x^3+x^4+x^5], \\
	f(x,1) &\equiv_{K_2}& 11[x+x^3+x^5+x^9], \\
	f(x+x^3,1) &\equiv_{K_2} & 11[x+x^{13}+x^{15}+x^{25}+x^{27}],\\
	f(1,x+x^3) &\equiv_{K_2} & 11[x+x^{4}+x^{9}+x^{12}+x^{13}+x^{15}]
\end{eqnarray*}
belong to $S$, for all $x\in R$.  This yields
$$f(x+x^3,1)+	f(1,x+x^3)+f(x^3,1)+f(1,x^5)\equiv _{K_2} 11[x^3+x^4+x^5+x^{12}+x^{20}]\in S, \; \forall x\in R.$$ 
By using $11[x^i+x^{2i}]\in K_2$ for all $i\in \mathbb{N}$ and $x\in R$,  the above relation is further reduced to
$$11[x^3+x^4+x^5+x^6+x^{10}]\equiv _{K_2} 11x^4\equiv _{K_2}11x^2\equiv _{K_2}11x,$$
which means that $11x\in S^{11}$ for all $x\in R$.  Hence,  the proof for the equivalence of (1) and (2) is completed.  The assertion for an order in an algebraic number field for $p=11$ in the theorem is an immediate consequence of Theorem \ref{odercri} and the equivalence of  (1) and (2).   The proof of the last sentence follows from the statement (2), Theorem \ref{prop32} and the fact that $\{a_0^{11}+(\operatorname{mod} 11R )\mid a_0\in R\}$ is an additive group.

\subsection{Proof of Theorem \ref{degprime} for $p=13$}  Due to Theorem \ref{lem39}, the statements (1) implies (2). To show (2) $\longrightarrow$ (1), by Theorem \ref{bthm35}, it suffices to concentrate on matrices in $M_2(R)$.  Let $M\in M_2(R)$.  To show that (1) holds, we use the equivalent statements of (1) and (3) in Theorem \ref{basicthm3.1}.  Let $$S^{13}:=\left< tr(A^{13})\mid A\in M_2(R) \right> \leq R$$ 
be the (additive) subgroup of $R$ generated by traces of $13$-th powers of matrices in $M_2(R)$.  We need to show that $tr(M):=\alpha_0\in S^{13}$.  By the assumption (2),  there exist $\alpha_1, \alpha_2\in R$ such that $$\alpha_0=\alpha_1^{3}+13\alpha_2.$$
Note that $$\alpha_1^{13}=tr(\begin{pmatrix}
\alpha_1 & 0 \\
0 & 0 
\end{pmatrix}^{13})\in S^{13}.$$  Then, in order to show that $\alpha_0\in S^{13}$, it suffices to show that $13\alpha\in S^{13}$, for each $\alpha\in R$. 

Now, for any $t, \delta\in R$, we have $A(t,\delta):=\begin{pmatrix}
t & \delta \\
-1 & 0 
\end{pmatrix}\in M_2(R)$.  By (\ref{trf}), we compute directly that
\begin{equation}\label{trA13}
tr(A(t,\delta)^{13})=t^{13}-13[t^{11}\delta-5t^{9}\delta^2+12t^{7}\delta^3-14t^5\delta^4+7t^3\delta^5-t\delta^6] \in S^{13}, \;\forall t,\delta\in R.
\end{equation}
When $t=1$ and $\delta=0$, we have $1=tr(A(1,0)^{13})\in S$ and thus $\mathbb{Z}\subseteq S$ (because $S^{13}$ is a group). 
By setting $t=-1, \delta=x$ (in (\ref{trA13})) and $x^{13}\in S^{13}$ equipped with $\mathbb{Z}\subseteq S^{13}$ , we have that 
$$p(t,\delta):=13[t^{11}\delta-5t^{9}\delta^2+12t^{7}\delta^3-14t^5\delta^4+7t^3\delta^5-t\delta^6] \in S^{13}, \;\forall t,\delta\in R.$$
Then
$$p_0(x):=p(1,x)=13[x-5x^2+12x^3-14x^4+7x^5-x^6] \in S^{13}, \;\forall x\in R.$$
Let $K_0:=\mathbb{Z}\cup \{x^{13i}\;|\; x\in R, i\in \mathbb{N}\}$.  Then $K_0\subseteq S^{13}$.   We now compute that:
\begin{eqnarray*}
	p_1(x):=p_0(x+1)-p_0(x)&\equiv_{K_0}&13[6x^5-20x^4+6x^3-7x^2+x]\\
	p_2(x):=p_0(x)-p_0(-x)&\equiv_{K_0}&13[14x^5+24x^3+2x]\\
	p_3(x):=3p_2(x)-7p_1(x)&\equiv_{K_0}&13[140x^4+30x^3+49x^2-x]\\
	p_4(x):=p_3(x+1)-p_3(x)&\equiv_{K_0}&13[560x^3+930x^2+748x]\\
	p_5(x):=p_3(x)-p_3(-x)&\equiv_{K_0}&13[60x^3-2x]\\
	p_6(x):=3p_4(x)-28p_5(x)&\equiv_{K_0}&13[2790x^2+2300x]\\
	p_7(x):=p_6(x)-p_6(-x)&\equiv_{K_0}&13[4600x]\\
	p_8(x):=p_6(x+1)-p_6(x)&\equiv_{K_0}&13[5580x]
\end{eqnarray*}
belong to $S^{13}$ for all $x\in R$.  Since $\gcd(13[4600x],13[5580x])=13[20x]$ and the Euclidean algorithm for $13[4600x], 13[5580x]$ produces an element in $S^{13}$, we conclude that $13[20x^i]\in S^{13}$ for all $x\in R$ and $i\in \mathbb{N}$.  In particular, we have that $3\cdot 13[20x^3]\in S^{13}$.  By $p_5(x)$, we can also conclude that $13[2x]\in S^{13}$ for all $x\in R$.  Moreover, $p_1(x)$ is reduced to $13[x+x^2]$ and $p_0(x)$ is reduced to $13[x+x^2+x^5+x^6]$ which is also reduced to $13[x^5+x^6]$ (by $13[x+x^2]\in S^{13}$).   By setting $$K_1:=K_0\cup\{13[2x^i], 13[x^i+x^{2i}], 13[x^{5i}+x^{6i}]\;|\;x\in R, i\in \mathbb{N}\},$$ we have $K_1\subseteq S^{13}$ and $p(t,\delta)$ is reduced to
$$f(t,\delta):=13[t^{11}\delta+t^9\delta^2+t^3\delta^5+t\delta^6]\in S^{13},\;\forall t,\delta\in R.  $$
Based on $K_1\subseteq S^{13}$, we compute that:
\begin{eqnarray*}
	f(x,1) &\equiv_{K_1}& 13[x+x^3+x^9+x^{11}], \\
	f(x,1+x^2) &\equiv_{K_1} & 13[x+x^3], \\
	f(x,x+x^3) &\equiv_{K_1}& 13[x^7+x^8+x^{10}+x^{12}+x^{14}+x^{16}+x^{18}+x^{19}], \\
	f(x,1+x^3) &\equiv_{K_1} & 13[x+x^3+x^6+x^7+x^9+x^{11}+x^{14}+x^{18}+x^{19}],
\end{eqnarray*}
belong to $S$, for all $x\in R$.  This yields that $f(x,1)+	f(x,1+x^2)+f(x,x+x^3)+f(x,1+x^3) $ is congruence to
\begin{eqnarray*}
	13[x+x^3+x^6+x^8+x^{10}+x^{12}+x^{16}] 
	&\equiv _{K_1}& 13[x+(x^3+x^6)+(x^8+x^{16})+(x^{10}+x^{12})]\\
	&\equiv _{K_1}& 13x\in S^{13}
\end{eqnarray*} 
for all $x\in R$.  Hence,  the proof for the equivalence of (1) and (2) is completed.  The assertion for an order in an algebraic number field for $p=13$ in the theorem is an immediate consequence of Theorem \ref{odercri} and the equivalence of  (1) and (2).   The proof of the last sentence follows from the statement (2), Theorem \ref{prop32} and the fact that $\{a_0^{13}+(\operatorname{mod} 13R )\mid a_0\in R\}$ is an additive group.

\subsection{Proof of Theorem \ref{deg16}}  Due to Theorem \ref{lem39}, each of the statements (1), (2), (3), (4) implies (5).  It is also obvious that (4) $\longrightarrow$ (3) $\longrightarrow$ (2) $\longrightarrow$ (1).  So, it remains to show only that (5) $\longrightarrow$ (4) for $M_2(R)$ (by Theorem \ref{bthm35}). Suppose that (5) holds.  Let $M\in M_2(R)$.  To show that (4) holds, we use the equivalent statements of (1) and (3) in Theorem \ref{basicthm3.1}.  Let $$S^{16}:=\left< tr(A^{16})\mid A\in M_2(R) \right> \leq R$$ 
be the (additive) subgroup of $R$ generated by traces of $16$-th powers of matrices in $M_2(R)$.  We need to show that $tr(M):=\alpha_0\in S^{16}$.  By the assumption (5) and the same proof of Proposition \ref{prop1}, for any positive integer $m$, there exist $\alpha_1, \alpha_2,\dots,\alpha_m,\alpha_m' \in R$ such that $$\alpha_0=\alpha_1^{16}+2\alpha_2^{16}+\cdots +2^{m-1}\alpha_m^{16}+2^m \alpha_m'.  $$
Note that $$\alpha_i^{16}=tr(\begin{pmatrix}
\alpha_i & 0 \\
0 & 0 
\end{pmatrix}^{16})\in S^{16}$$ for all $i=1,\dots,m$.  Then, in order to show that $\alpha_0\in S^{16}$, it suffices to show that, for each $\alpha\in R$, there exists a positive integer $m$ such that $2^m\alpha\in S^{16}$.

Now, for any $t, \delta\in R$, we have $A(t,\delta):=\begin{pmatrix}
t & \delta \\
-1 & 0 
\end{pmatrix}\in M_2(R)$.  By (\ref{trf}), we compute directly that
\begin{equation}\label{trA16}
tr(A(t,\delta)^{16})=t^{16}-16t^{14}\delta+104t^{12}\delta^2-352t^{10}\delta^3+660t^8\delta^4-672t^6\delta^5+336t^4\delta^6-64t^2\delta^7+2\delta^8.
\end{equation}
When $t=1$ and $\delta=0$, we have $1=tr(A(1,0)^{16})\in S^{16}$ and thus $\mathbb{Z}\subseteq S$ (because $S^{16}$ is a group).  By setting $t=0, \delta=x$ in (\ref{trA16}), we have that $$P_0(x):=2x^8=tr(A(0,x)^{16})\in S^{16}, \; \forall x\in R.$$ 
By setting $t=1, \delta=x$ (in (\ref{trA16})) equipped with $\mathbb{Z}\subseteq S^{16}$ and $P_0(x)\in S^{16}$ for all $x\in R$, we have that 
\begin{equation}\label{Q016}
Q_0(x):=-16x+104x^2-352x^3+660x^4-672x^5+336x^6-64x^7\in S^{16}, \;\forall x\in R.
\end{equation}
Then $P_0(x+1)-P_0(x-1)\in S^{16}$ for all $x\in R$, and,  by using  $\mathbb{Z}\subseteq S$ and $P_0(x)\in S^{16}$, we have that
$$P_1(x):=112x^6+280 x^4+112 x^2 \in S^{16}, \; \forall x\in R. $$
Also, $Q_0(x)+Q_0(-x)\in S^{16}$ for all $x\in R$, and, by using  $\mathbb{Z}\subseteq S^{16}$, we have that 
$$Q_1(x):=672x^6+1320 x^4+208 x^2 \in S^{16}, \; \forall x\in R. $$
This implies that $6P_1(x)-Q_1(x)\in S^{16}$ for all $x\in R$; namely
$$P_2(x):=360 x^4+464 x^2 \in S^{16}, \; \forall x\in R.  $$
Moreover, $Q_1'(x):=Q_1(x+1)-Q_1(x-1)\in S^{16}$ and thus $Q_1'(x+1)-Q_1'(x-1)\in S^{16}$ for all $x\in R$.  After using $\mathbb{Z}\subseteq S^{16} $, we get that 
$$Q_2(x):=80,640 x^4+385,920 x^2 \in S^{16}, \; \forall x\in R,  $$
and thus $Q_2(x)-224 P_2(x)\in S^{16}$ for all $x\in R$; namely
$$Q_3(x)=281,984 x^2\in S^{16},\; \forall x\in R. $$
Furthermore, $P_2'(x):=P_2(x+1)-P_2(x-1)\in S^{16}$ and thus $P'_2(x+1)-P'_2(x-1)\in S^{16}$ for all $x\in R$.  After using $\mathbb{Z}\subseteq S^{16}$, we obtain that
$$ P_3(x):=17,280 x^2\in S^{16}, \; \forall x\in R. $$
Note that $\gcd(P_3(x),Q_3(x))=128x^2$ and that doing Euclidean algorithm for $P_3(x)$ and $Q_3(x)$ produces an element in $S^{16}$. We now reach to the fact that
$$R(x):=128 x^2 \in S^{16},\; \forall x\in R. $$
Therefore, $R(x+1)\in S^{16}$ for all $x\in R$ and after using $\mathbb{Z}\subseteq S^{16}$ and $R(x)\in S$, we finally conclude that
$$2^8 x=256 x \in S^{16},\; \forall x\in R,$$
which completes the proof for the equivalence of (4) and (5).  The last assertion in the theorem is an immediate consequence of Theorem \ref{odercri} and the equivalence of one of (1)-(4) and (5).

To prove the statement (6), we consider $tr(M)\in W^*(2,4,R)$; namely $$tr(M)=a_0^{16}+2a_1^{8}+4a^4+16a+16a^{2m+1}$$  or $$tr(M)=b_0^{32}+2b_1^{16}+4b_2^{8}+8b_3^{4}+16b_4^2+32 b_5$$ for some $a_0, a_1, a, b_0, b_1,b_2,b_3,b_4,b_5\in R$ and $m\in \mathbb{N}$. To complete the proof, it suffices to show that  $\alpha^{16}$,  $2\alpha^8$, $8\alpha^4$, $16\alpha^2$, $32\alpha$ and $4\alpha^4+16\alpha+16\alpha^{2m+1}$ for any $m\in \mathbb{N}$ are sums of traces of sixteenths powers of matrices in $M_2(R)$, for each $\alpha\in R$; the sum of sixteenth powers matrices in the form (direct sum) $N\oplus O_{n-2}$, with $N\in M_2(R)$ and zero matrix $O_{n-2}\in M_{n-2}(R)$, will full fill the required sum of sixteenth powers matrices in $M_n(R)$. 
  
  Again, we apply the equivalent statements of (1) and (3) in Theorem \ref{basicthm3.1} and also use the fact that $$\alpha^{16}, \quad P_0(\alpha):=2\alpha^8, \quad R(\alpha):=128\alpha^2, \quad P_2(\alpha):= 360\alpha^4+464\alpha^2  $$
belong to the additive group $S^{16}$, for each $\alpha\in R$. So, to finish the proof, it remains to show that $8\alpha^4$, $16\alpha^2$, $32\alpha$ and $4\alpha^4+16\alpha+16x^{2m-1}$ are in $S^{16}$, for each $\alpha\in R$ and $m\in \mathbb{N}$.  Now, by using $R(\alpha)\in S^{16}$ and $P_2(\alpha)\in S^{16}$, we get that $R_1(\alpha):=[3R(\alpha^2)+4R(\alpha)]-P_2(\alpha)=24\alpha^4+48\alpha^2\in S^{16}$ for all $\alpha\in R$. This yields that \begin{equation*}
R_2(\alpha):=R(\alpha^2)-5R_1(\alpha)+2R(\alpha)=8\alpha^4+16\alpha^2 \in S^{16},\;\; \forall \alpha\in R,
\end{equation*}
and that 
\begin{equation*}
R_3(\alpha):=R_2(2\alpha)-2[8R_2(\alpha)-R(\alpha)]=64\alpha^2 \in S^{16},\;\; \forall \alpha\in R.
\end{equation*}
So, by using $\mathbb{Z}\subseteq S^{16}$ and $R_3(\alpha+1)\in S$, we obtain that $R_3(\alpha+1)-R_3(\alpha)-64=128\alpha $ is in $S^{16}$ for all $\alpha\in R$. Then, $L:=\mathbb{Z}\cup\{128\alpha^i, 64\alpha^{2i},2\alpha^{8i}, \alpha^{16i}\mid \alpha\in R, i\in \mathbb{N}\}\subseteq S^{16}$.  By (\ref{Q016}), $Q_0(2\alpha)\in S$ and thus (by using $L$), $$R_4(\alpha):=-32\alpha+32\alpha^2 \equiv_L Q_0(2\alpha)\in S, \;\; \forall \alpha\in S^{16}.$$
 Then $$R_5(\alpha):=4R_2(\alpha)-R_4(\alpha^2)=96 \alpha^2\in S^{16},\;\; \forall \alpha\in R,  $$
 and thus we have proved that $$32\alpha=R_5(\alpha)-R_4(\alpha)-R_3(\alpha)\in S^{16},\;\; \forall \alpha\in R.$$
It also implies that, $T:=L\cup\{32\alpha^i\mid \alpha\in R,  i\in \mathbb{N}\}\cup \mathbb{Z}\subseteq S$.  By using this fact, we compute that $$ 16\alpha^2\equiv_T R_2(\alpha+1)-R_2(\alpha) \in S^{16}, \;\; \forall \alpha\in R, $$ 
and hence, by $R_2(\alpha)$ again, $8\alpha^4\in S^{16}$ for all $\alpha\in R$. So far, $$U:=T\cup\{16\alpha^{2i}, 8\alpha^{4i}\mid \alpha\in R, i\in \mathbb{N}\}\subseteq S^{16}$$ and therefore, by $Q_0(\alpha)$ in (\ref{Q016}), we conclude that
$$q_0(\alpha):=4\alpha^4+8\alpha^2+16\alpha\equiv_U Q_0(\alpha)\in S^{16},\;\; \forall \alpha\in R.  $$
Note further that $q_0(\alpha+1)\equiv_U 16\alpha^3+8\alpha^2:=g(\alpha)\in S^{16} $ for all $\alpha\in R$. Then $$q_0(\alpha)+g(\alpha)\equiv_U 4\alpha^4+16\alpha^3+16\alpha =: q(\alpha)\in S^{16},\;\; \forall \alpha\in R. $$
Moreover, $$ g(\alpha+\beta)\equiv_U 16[\alpha^2\beta+\alpha\beta^2+\alpha\beta]=:f(\alpha,\beta)\in S^{16},\;\; \forall \alpha,\beta\in R.$$
In particular, when $k\geq 2$, 
$$f(\alpha^2,\alpha^{2k-3}) =16[\alpha^{2k+1}+\alpha^2\alpha^{2(2k-3)}+\alpha^{2k-1}] \equiv_{U} 16[\alpha^{2k+1}+\alpha^{2k-1}]:=h(\alpha,k)$$ is in $S^{16}$ for all $\alpha\in R$.  Hence, for each $m\in \mathbb{N}$ such that $m\geq 2$,
$$ 16\alpha^{2m-1}+16\alpha^3 =16[\alpha^{2m-1}+\alpha^3]\equiv_{U}h(\alpha,2)+\cdots+h(\alpha,m)\in S^{16},$$
which yields that  $$4\alpha^4+16x^{2m-1}+16\alpha \equiv_{U}   (16\alpha^{2m-1}+16x^3)+l_1(x),  \in S^{16}\;\; \forall \alpha\in R.$$
Equipped with  $4\alpha^4+16\alpha^3+16\alpha\in S^{16}$, we can now conclude that, for each $m\in \mathbb{N}$,
$4\alpha^4+16\alpha+16x^{2m+1}$ are in $S^{16}$ for all $\alpha\in R$, which completes the proof.

\begin{rem}\label{remarkofdegree16}
	By using $U$, $tr(A(t,\delta)^{16})$ is reduced to $$F(t,\delta):=16t^{14}\delta+8t^{12}\delta^2+4t^8\delta^4\in S^{16},$$ for all $t,\delta\in R$.  However, (by trying with so many polynomials $g_1(\alpha),g_2(\alpha)\in R[\alpha]$ ), it seems that $F(g_1(\alpha),g_2(\alpha))\mod U$ is in the additive group generated by  $g(\alpha),q(\alpha)$ and $h(\alpha,m)$, $m\geq 2$.
\end{rem}

\section*{Acknowledgments}
The second author would like to thank Faculty of Science, Naresuan University, for financial support on the project number P2565C007.

\end{document}